\title{Expected number and distribution of critical points of  real Lefschetz pencils}
\author{Michele Ancona \thanks{Institut Camille Jordan, Umr Cnrs 5208, Universit\'{e} Claude Bernard Lyon 1. ancona@math.univ-lyon1.fr} }
\theoremstyle{plain}
\newtheorem{theo}{Theorem}[section]
\newtheorem{lemm}[theo]{Lemma}
\newtheorem{prop}[theo]{Proposition}
\newtheorem{cor}[theo]{Corollary}
\newtheorem{oss}[theo]{Remark}
\theoremstyle{definition}
\newtheorem{defi}[theo]{Definition}
\newcommand{\dV}{\textrm{dV}}
\newcommand{\Fix}{\textrm{Fix}}
\newcommand{\valcrit}{\textrm{valcrit}}
\newcommand{\Sym}{\textrm{Sym}}
\newcommand{\norm}[1]{\left\lVert#1\right\rVert}
\renewcommand{\Im}{{\rm Im}\,}
\begin{document}
\maketitle

\section*{Abstract}

We give an asymptotic probabilistic real Riemann-Hurwitz formula computing the expected real ramification index of a random covering over the Riemann sphere.
More generally,  we study the asymptotic expected number and distribution of critical points of a random real Lefschetz pencil over a smooth real algebraic variety.

\section{Introduction}
The Riemann-Hurwitz formula says that the total ramification index  of a degree $d$ branched covering  $f:\Sigma\rightarrow \Sigma'$  between two compact Riemann surfaces equals $d\cdot\chi(\Sigma')-\chi(\Sigma)$. In particular, if $\Sigma'=\mathbb{C} \mathbb{P}^1$, the total ramification index is  $2d+2g-2$, where $g$ is the genus of $\Sigma$.\\
More generally, if $u:X\dashrightarrow \mathbb{C} \mathbb{P}^1$ is a Lefschetz pencil on a complex manifold $X$ of dimension $n$, then 
$$(-1)^n\#\textrm{crit}(u)=\chi(X)-2\chi(F)+\chi(Y)$$ 
where $F$ is a smooth fiber of $u$ and $Y$ is the base locus of $u$.

The questions that motivate this paper are the following: \emph{how do  these critical points distribute on the variety? When $u$ is defined over $\mathbb{R}$, what about the number  of real critical points?}\\
We answer these questions by computing the asymptotic expected number of real critical points of real Lefschetz pencils and also the asymptotic distribution of such points.

The chosen random setting has already been considered by  Shiffman and Zelditch in \cite{sz} to study the integration current over the  zero locus of a  random global section of a line bundle over a complex projective manifold.

In the real case, Kac \cite{kac}, Kostlan \cite{ko} and Shub and Smale \cite{ss} computed the expected number of real roots of a random real polynomial.
In higher dimensions, Podkorytov \cite{pod} and B\"urgisser \cite{bur} computed the expected Euler characteristic of  random real algebraic submanifolds and Letendre  \cite{let} the expected volume (see \cite{lemn}  for the expected length of a random lemniscate). In \cite{gw1,gw2,gw3} Gayet and Welschinger estimated from above and below the Betti numbers of the real locus of real algebraic submanifolds  (see also \cite{ll}).
 For intersection of real quadrics, a precise asymptotic of the total Betti number has been given by Lerario and Lundberg in \cite{ll2}.
In \cite{nic} Nicolaescu computed the expected number of critical of a random smooth function on a Riemannian manifold have and how they distruibute.

\subsection*{Statements of the results}

Let $X$ be a smooth real projective manifold of dimension $n$, that is a complex projective manifold equipped with an anti-holomorphic involution $c_X$, called the real structure. We denote by $\mathbb{R} X=\Fix(c_X)$  its real locus. Let $\mathcal{L}$ be a positive real line bundle over $X$.
For large $d$, for almost all pairs  $(\alpha,\beta)\in  H^0(X;\mathcal{L}^d)^2 $ 
(resp. $\mathbb{R} H^0(X;\mathcal{L}^d)^2)$ of (real) global section, the map $u_{\alpha\beta}:X\dashrightarrow \mathbb{C} \mathbb{P}^1$ defined by $x\mapsto [\alpha(x):\beta(x)]$ is a (real) Lefschetz pencil, see Proposition \ref{lefpen}. Recall that a real Lefschetz pencil is a Lefschetz pencil $u:X\dashrightarrow \mathbb{C} \mathbb{P}^1$ such that $\textrm{conj}\circ u= u\circ c_X.$
\begin{defi} We denote the set of critical points of $u_{\alpha\beta}$ by $\textrm{crit}(u_{\alpha\beta})$ and by $\mathbb{R} \textrm{crit}(u_{\alpha\beta})=\textrm{crit}(u_{\alpha\beta})\cap \mathbb{R} X$ the set of real critical points. 
\end{defi} 
The number  of real critical points of a real Lefschetz pencil depends on the pair $(\alpha, \beta)$. The main theorem of this paper is the computation of the expected value of this number. Recall that, by definition, the expected value of $\#\mathbb{R} \textrm{crit}(u_{\alpha\beta})$ equals
$$\mathbb{E}[\#\mathbb{R} \textrm{crit}(u_{\alpha\beta})]=\int_{(\alpha,\beta)\in \mathbb{R} H^0(X,\mathcal{L}^d)^2}(\#\mathbb{R} \textrm{crit}(u_{\alpha\beta}))\textrm{d}\mu(\alpha,\beta).$$
\begin{theo}\label{num}
Let $X$ be a smooth real projective manifold of dimension $n$ and $(\mathcal{L},h)$ be a  real Hermitian line bundle over $X$ with positive curvature. Then 

$$
\lim_{d\rightarrow +\infty}\frac{1}{\sqrt{d}^n}\mathbb{E}[\#\mathbb{R} \textrm{crit}(u_{\alpha\beta})]=\left\{\begin{array}{ll}
\frac{n!!}{(n-1)!!}e_{\mathbb{R}}(n)\frac{\pi}{2}\textrm{Vol}_h(\mathbb{R} X) &\textrm{if n is odd}
\\\frac{n!!}{(n-1)!!}e_{\mathbb{R}}(n)\textrm{Vol}_h(\mathbb{R} X)& \textrm{if n is even}.
\end{array}\right.$$
\end{theo}

In this theorem, $\textrm{Vol}_h(\mathbb{R} X)$ is the volume of $\mathbb{R} X$ with respect to the Riemannian volume form  $\dV_h$ induced by the positive curvature  of the  metric $h$. The  probability measure we consider is a natural Gaussian probability on $\mathbb{R} H^0(X;\mathcal{L}^d)^2$ (see Section 2.1) and $e_{\mathbb{R}}(n)$ is the expected value of (the absolute value) of the determinant of real symmetric matrices (for the explicit values of $e_{\mathbb{R}}(n)$, see \cite[Section 2]{gw2}).\\
We recall that $e_{\mathbb{R}}(1)=\sqrt{\frac{2}{\pi}}$, then we have:
\begin{cor} Let $(\Sigma,c_{\Sigma})$ be a real Riemann surface and $(\mathcal{L},h)$ be a real Hermitian line bundle of degree $1$. Then, for every pair $(\alpha,\beta)\in\mathbb{R} H^0(X;\mathcal{L}^d)^2$ without common zeros, the map $u_{\alpha\beta}$ is a degree $d$ branched covering between $\Sigma$ and $\mathbb{C} \mathbb{P}^1$ and  the expected real total ramification index of $u_{\alpha\beta}$  is equivalent to $$\sqrt{\frac{\pi}{2}}Vol_h(\mathbb{R}\Sigma)\sqrt{d}$$ as $d$  tends to $+\infty$.\\
\end{cor}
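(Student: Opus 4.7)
The plan is to recognize that this corollary is a direct specialization of Theorem \ref{num} to dimension $n=1$, once we identify critical points of the pencil with ramification points of the branched covering. First I would check that the hypotheses of Theorem \ref{num} apply: since $(L,h)$ is a positive real Hermitian line bundle of degree $1$ on the Riemann surface $\Sigma$, $L^d$ has degree $d$, so for a generic pair $(\alpha,\beta)\in \R H^0(\Sigma;L^d)^2$ without common zeros the map $u_{\alpha\beta}\colon \Sigma\to \C P^1$ is a holomorphic map of degree $d$, and by Prop. \ref{lefpen} it is almost surely a (real) Lefschetz pencil.

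Next I would justify that the real total ramification index coincides with $\#\R\crit(u_{\alpha\beta})$. Since $u_{\alpha\beta}$ is Lefschetz, every critical point is a Morse singularity: locally it is given (in suitable coordinates) by $z\mapsto z^2$, so the local ramification index at each critical point is $2$ and contributes exactly $1$ to the total ramification index $R=\sum_{x\in\crit(u_{\alpha\beta})}(e_x-1)$. The real ramification index therefore equals the cardinality $\#\R\crit(u_{\alpha\beta})$, and the expectation of the former coincides with the quantity computed by Theorem \ref{num}.

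It remains to substitute $n=1$ into the formula of Theorem \ref{num}. Since $n=1$ is odd, we are in the first case and obtain
\[
\lim_{d\to+\infty}\frac{1}{\sqrt d}\,\mathbb{E}\bigl[\#\R\crit(u_{\alpha\beta})\bigr]
=\frac{1!!}{0!!}\,e_{\R}(1)\,\frac{\pi}{2}\,\Vol_h(\R\Sigma)
=\sqrt{\tfrac{2}{\pi}}\cdot\frac{\pi}{2}\,\Vol_h(\R\Sigma)
=\sqrt{\tfrac{\pi}{2}}\,\Vol_h(\R\Sigma),
\]
using $e_{\R}(1)=\sqrt{2/\pi}$, which yields the claimed equivalent $\sqrt{\pi/2}\,\Vol_h(\R\Sigma)\sqrt{d}$.

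Since this is a direct corollary, there is no genuine obstacle; the only subtle point worth underlining is the identification between ramification index and number of Morse critical points, which is clean here precisely because the Lefschetz hypothesis forces all ramifications to be simple. Any temptation to prove the statement independently from scratch (e.g.\ by a direct Kac--Rice computation on $\Sigma$) should be resisted, as it would duplicate the one-dimensional case of the argument that establishes Theorem \ref{num}.
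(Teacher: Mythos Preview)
Your proposal is correct and matches the paper's approach exactly: the corollary is stated immediately after recalling that $e_{\R}(1)=\sqrt{2/\pi}$, and is obtained simply by specializing Theorem~\ref{num} to $n=1$. Your extra care in identifying the real total ramification index with $\#\R\crit(u_{\alpha\beta})$ via the Lefschetz (Morse) condition is a welcome clarification that the paper leaves implicit.
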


Theorem \ref{num} is a consequence of a more precise equidistribution   result. In order to introduce it, let us define a natural empirical measure associated with the real  critical points of a Lefschetz pencil as follows.
For any pair $(\alpha,\beta)\in  \mathbb{R} H^0(X;\mathcal{L}^d)^2$ of real global sections of $\mathcal{L}^d$, we define $$  \mathbb{R} \nu_{\alpha\beta}=\sum_{x\in \mathbb{R} \textrm{crit}(u_{\alpha\beta})} \delta_x.$$ 

\begin{theo}\label{eqreal}
Let $X$  be a smooth real projective manifold of dimension $n$ and $(\mathcal{L},h)$ be  a  real Hermitian line bundle over $X$ with positive curvature $\omega$. Then 

$$
\lim_{d\rightarrow +\infty}\frac{1}{\sqrt{d}^n}\mathbb{E}[\mathbb{R} \nu_{\alpha\beta}]=\left\{\begin{array}{ll}
\frac{n!!}{(n-1)!!}e_{\mathbb{R}}(n)\frac{\pi}{2}\dV_h &\textrm{if n is odd}
\\\frac{n!!}{(n-1)!!}e_{\mathbb{R}}(n)\dV_h& \textrm{if n is even}.
\end{array}\right.$$
weakly in the sense of distributions. Here, $\dV_h$ is the Riemannian volume form induced by the curvature $\omega$.
\end{theo}

Theorem \ref{eqreal} says that, for any continuous function  $\varphi\in C^0(\mathbb{R} X)$, we have 
$$
\lim_{d\rightarrow +\infty}\frac{1}{\sqrt{d}^n}\mathbb{E}[\mathbb{R} \nu_{\alpha\beta}](\varphi)=\left\{\begin{array}{ll}
\frac{n!!}{(n-1)!!}e_{\mathbb{R}}(n)\frac{\pi}{2}\int_{\mathbb{R} X}\varphi \dV_h &\textrm{if n is odd}
\\\frac{n!!}{(n-1)!!}e_{\mathbb{R}}(n)\int_{\mathbb{R} X}\varphi \dV_h & \textrm{if n is even}.
\end{array}\right.$$
where the expected value is defined by $$\mathbb{E}[\mathbb{R} \nu_{\alpha\beta}](\varphi)=\int_{\mathbb{R} H^0(X;\mathcal{L}^d)^2}\sum_{x\in \mathbb{R} \textrm{crit}(u_{\alpha\beta})}\varphi(x)\textrm{d}\mu(\alpha,\beta).$$

In the complex case, we obtain a similar  equidistribution theorem, whose proof   follows along the same lines. For any pair $(\alpha,\beta)\in  H^0(X;\mathcal{L}^d)^2 $ 
 of  global sections of $\mathcal{L}^d$, we define $$ \nu_{\alpha\beta}=\displaystyle \sum_{x\in \textrm{crit}(u_{\alpha\beta})} \delta_x $$ to be the empirical measure associated with the   critical points of the pencil $u_{\alpha\beta}$.

\begin{theo}\label{eqcom}
Let $X$ be a smooth complex projective manifold of dimension $n$ and $(\mathcal{L},h)$ be  a Hermitian line bundle over $X$ with positive curvature $\omega$. Then 

$$\lim_{d\rightarrow +\infty}\frac{1}{d^n}\mathbb{E}[\nu_{\alpha\beta}]=(n+1)\omega^n$$
weakly in the sens of distribution.
\end{theo}
As before, Theorem \ref{eqcom} says that, for any continuous function  $\varphi$ on $X$, we have

$$\lim_{d\rightarrow +\infty}\frac{1}{d^n}\mathbb{E}[\nu_{\alpha\beta}](\varphi)=(n+1)\cdot\int_X\varphi \omega^n.$$
\subsubsection*{Organisation of the paper}
In Section \ref{not}, we introduce the Gaussian measure on $H^0(X;\mathcal{L}^d)^2$ associated with a Hermitian  line  bundle $(\mathcal{L},h)$ over a complex manifold $X$. We also give the same construction for the real case. We follow the approach of  \cite{sz,gw1,gw2}. In Section \ref{secpe}, we present some classical results about Lefschetz pencils on complex manifolds.
In Sections \ref{lefpeak} and \ref{lefincvar} we introduce our main tools, namely  the H\"ormander peak sections (see also \cite{gw2}, \cite{tian}) and the incidence manifold (see \cite{ss}).
Section \ref{chproof}  is completely devoted to the proofs of the Theorems  \ref{num}, \ref{eqreal} and \ref{eqcom}. In Sections \ref{lefcoarea} and \ref{lefcomputation}, we prove the equidistribution of critical points of a (real) Lefschetz pencil over a (real) algebraic variety $X$. This will be done using coarea formula and peak sections. These ideas are taken from \cite{gw2}. In Section \ref{seccomput} we will compute the universal constant by direct computation.

\subsubsection*{Acknowledgments}
I am very grateful to my advisor Jean-Yves Welschinger for all the time he devoted to me and for all the fruitful discussions we had. 

 This work was performed within the framework of the LABEX MILYON (ANR-10-LABX-0070)
of Universit\'e de Lyon, within the program "Investissements d'Avenir"
(ANR-11-IDEX-0007) operated by the French National Research Agency (ANR).

\section{Definitions and main tools}
\subsection{Notations}\label{not}

Let $X$ be a complex manifold of dimension $n$.
Let $\mathcal{L}\rightarrow X$ be a holomorphic line bundle equipped with a Hermitian metric $h$ of positive curvature $\omega\in\Omega^{(1,1)}(X,\mathbb{R})$. The curvature form induces a K\"ahler metric   and a normalized volume form  $\textrm{d}x=\frac{\omega^n}{\int_X\omega^n}$ on $X$.\\
The Hermitian metric $h$ induces a Hermitian metric $h^d$ on $\mathcal{L}^d$ for any integer $d>0$ and also a $L^2$-Hermitian product $\langle\cdot,\cdot \rangle_{L^2}$ on the space $H^0(X;\mathcal{L}^d)$ of global holomorphic sections of $\mathcal{L}^d$. It is defined by 
$$\langle\alpha,\beta\rangle_{L^2}=\int_Xh^d(\alpha,\beta)\textrm{d}x$$
for any $\alpha,\beta$ in $H^0(X;\mathcal{L}^d)$.\\
This $L^2$-Hermitian product induces a Gaussian measure  on $H^0(X;\mathcal{L}^d)^2$
defined by

$$\mu(A)=\frac{1}{\pi^{2N_d}}\int_Ae^{-\norm{\alpha}_{L^2}^2-\norm{\beta}_{L^2}^2}\textrm{d}\alpha\textrm{d}\beta$$
for any open subset $A\subset H^0(X;\mathcal{L}^d)^2$ where $\textrm{d}\alpha\textrm{d}\beta$ is the Lebesgue measure associated with $\langle\cdot,\cdot\rangle_L^2$ and $N_d=\dim_{\mathbb{C}}H^0(X;\mathcal{L}^d)$.\\
Finally, a Lefschetz pencil on $X$ is a rational map $u:X\dashrightarrow\mathbb{C}\mathbb{P}^1$ having only non degenerated critical points and defined by two global sections of a holomorphic line bundle with smooth and transverse vanishing loci.\\ 

All these definitions have a real counterpart. 
\begin{itemize}
\item Let $X$ be a real algebraic variety of dimension $n$, that is a complex manifold equipped with an anti-holomorphic involution $c_X$. We denote by
$\mathbb{R}X=\Fix(c_X)$ its real locus. 
\item A real holomorphic line bundle $u:\mathcal{L}\rightarrow X$  is a line bundle  equipped with an anti-holomorphic involution $c_{\mathcal{L}}$ such that $p\circ c_X=c_{\mathcal{L}}\circ p$ and $c_{\mathcal{L}}$ is complex-antilinear in the fibers.
\item We denote by $\mathbb{R} H^0(X;\mathcal{L})$ the real vector space of real global section of $\mathcal{L}$, i.e. sections $s\in H^0(X;\mathcal{L})$ such that $s \circ c_X=c_{\mathcal{L}}\circ s$.
\item A real Hermitian metric on $\mathcal{L}$ is a Hermitian metric $h$ such that $c_{\mathcal{L}}^*h=\bar{h}$. If $(\mathcal{L},h)$ is a   line bundle over $X$ with positive curvature $\omega$, then $\omega(.,i.)$ is a Hermitian metric over $X$ and its real part defines a Riemannian metric over $\mathbb{R} X$. We denote  the Riemannian volume form induced by this metric by $\dV_h$.
\item The $L^2$-Hermitian product $\langle\cdot,\cdot\rangle_{L^2}$ on $H^0(X;\mathcal{L}^d)$ restricts to a $L^2$-scalar product on $\mathbb{R} H^0(X;\mathcal{L}^d)$, also denoted  by $\langle\cdot,\cdot\rangle_{L^2}$. Then, as in the complex case, also in the real case we have a natural Gaussian measure on $\mathbb{R} H^0(X;\mathcal{L}^d)^2$ defined by

$$\mu(A)=\frac{1}{\pi^{N_d}}\int_Ae^{-\norm{\alpha}_{L^2}^2-\norm{\beta}_{L^2}^2}\textrm{d}\alpha\textrm{d}\beta$$
for any open subset $A\subset \mathbb{R} H^0(X;\mathcal{L}^d)^2$ where $\textrm{d}\alpha\textrm{d}\beta$ is the Lebesgue measure associated with $\langle\cdot,\cdot\rangle_{L^2}$ and $N_d=\dim_{\mathbb{C}}H^0(X;\mathcal{L}^d)=\dim_{\mathbb{R}}\mathbb{R} H^0(X;\mathcal{L}^d)$.\\
\item A real Lefschetz pencil $u:X\dashrightarrow\mathbb{C} \mathbb{P}^1$ is a Lefschetz pencil  such that $p\circ c_X=\textrm{conj}\textrm{conj}\circ p$.
\end{itemize}
We conclude this section by introducing some notation on symmetric matrices.
 \begin{defi}\label{mat} For any $n\in \mathbb{N}^*$, we denote by $\Sym(n,\mathbb{R})$ the real vector space of real symmetric matrices of size $n\times n$. It is a vector space of dimension $\frac{n(n+1)}{2}$. We equip it with the basis $\mathcal{B}$ given by $\tilde{E}_{jj}$ and $\tilde{E}_{ij}=E_{ij}+E_{ji}$ for $1\leqslant i<j\leqslant n$, where, for any $k,l$ with $1\leqslant k,l\leqslant n$, we denote by $E_{kl}$  the elementary matrix whose entry at the $i$-th row and $j$-th column equals $1$ if $(i,j)=(k,l)$ and $0$ otherwise.
 \end{defi}
We equip $\Sym(n,\mathbb{R})$ with the scalar product turning $\mathcal{B}$ into an orthonormal basis. Let $\mu_{\mathbb{R}}$  the associated Gaussian probability measure. We then set 
$$e_{\mathbb{R}}(n)=\int_{A\in \Sym(n,\mathbb{R})}|\det A| \textrm{d}\mu_{\mathbb{R}}(A).$$

\subsection{Lefschetz pencils}\label{secpe}

In this section, we compute the asymptotic value of the number of critical points of a Lefschetz pencil (see also \cite[Section 1]{gw1}).\\
Recall that a \emph{Lefschetz fibration} is a map $X\rightarrow \mathbb{C} \mathbb{P}^1$ with only non degenerate critical points. The following proposition is a kind of Riemann-Hurwitz formula for Lefschetz pencils, for a proof see \cite[Proposition 1]{gw1}.

\begin{prop}\label{hurlef} Let $X$ be a smooth complex projective manifold of positive dimension $n$ equipped with a Lefschetz fibration $p:X\rightarrow\mathbb{C} \mathbb{P}^1$ and let $F$ be a regular fiber of $p$. Then we have the following equality:
$$\chi(X)=2\chi(F)+(-1)^n\#\textrm{crit}(p).$$
\end{prop}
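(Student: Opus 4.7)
The plan is to compute $\chi(X)$ by slicing $X$ along $p$ and combining local contributions from regular and critical values of $p$.

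First, let $c_1,\dots,c_k$ be the critical values of $p$ and choose pairwise disjoint small open disks $D_i\ni c_i$ in $\C P^1$; set $V=\C P^1\setminus\bigsqcup_i D_i$. On $V$ the map $p$ is a proper holomorphic submersion (it is proper because $X$ is projective, and a submersion by construction), hence by Ehresmann's theorem a locally trivial fibration with fibre $F$. Multiplicativity of the Euler characteristic then yields $\chi(p^{-1}(V))=\chi(F)\chi(V)=(2-k)\chi(F)$, since $V$ is the $2$-sphere with $k$ open disks removed.

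Next, I would analyse the preimage $U_i=p^{-1}(D_i)$ of a small disk around a critical value. Using the Lefschetz normal form $p(z)=z_1^2+\cdots+z_n^2$ at each non-degenerate critical point, a standard gradient flow argument shows that $U_i$ deformation retracts onto the singular fibre $F_i=p^{-1}(c_i)$, and that $F_i$ is obtained from a regular fibre $F$ by collapsing one vanishing $(n-1)$-sphere, per critical point above $c_i$, to a node. Since $\chi(S^{n-1})=1+(-1)^{n-1}$, collapsing one such sphere to a point changes the Euler characteristic by $(-1)^n$, and therefore $\chi(F_i)=\chi(F)+m_i(-1)^n$ where $m_i$ is the number of critical points of $p$ above $c_i$.

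Finally, I would assemble the pieces by inclusion-exclusion. The open sets $p^{-1}(V)$ and $\bigsqcup_i U_i$ cover $X$ and intersect in $\bigsqcup_i p^{-1}(\partial D_i)$, a disjoint union of locally trivial fibrations over circles with fibre $F$, and hence of vanishing Euler characteristic. Combining with the previous two steps gives
$$\chi(X)=(2-k)\chi(F)+\sum_{i=1}^k\bigl(\chi(F)+m_i(-1)^n\bigr)=2\chi(F)+(-1)^n\#\crit(p),$$
since $\sum_i m_i=\#\crit(p)$. The main obstacle is the local step: rigorously establishing that $U_i$ retracts onto $F_i$ and that $F_i$ has the vanishing-sphere description. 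This rests on the Milnor fibration of the singularity $z_1^2+\cdots+z_n^2$, whose Milnor fibre is diffeomorphic to the disk bundle $DT^*S^{n-1}$ and degenerates, as the value tends to $0$, onto the cone over the vanishing sphere $S^{n-1}$.
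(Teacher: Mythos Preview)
The paper does not supply its own proof of this proposition; it simply refers the reader to Proposition~1 of \cite{gw1}. Your argument is the standard Picard--Lefschetz computation and is correct in substance: decompose the base into small disks around the critical values and their complement, use Ehresmann over the regular part, identify each $p^{-1}(D_i)$ with the singular fibre up to homotopy, and count the effect of collapsing each vanishing $(n-1)$-sphere.

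One cosmetic point worth tightening: with $D_i$ open and $V=\C P^1\setminus\bigsqcup_i D_i$, the set $V$ is closed, so $p^{-1}(V)$ and the $U_i$ do not actually overlap in $p^{-1}(\partial D_i)$ as you state. To make the inclusion--exclusion step literally valid, either work with closed disks $\overline{D_i}$ so that $V\cap\overline{D_i}=\partial D_i$ and apply additivity of $\chi$ for a closed cover, or enlarge $V$ slightly to obtain a genuine open cover before invoking Mayer--Vietoris. Either fix is routine and the computation goes through unchanged.
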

Remark that if $u:X\dashrightarrow\mathbb{C} \mathbb{P}^1$ a Lefschetz pencil and we blow-up the base locus $\textrm{Base}(u)\doteqdot Y$, then we obtain a Lefschetz fibration $\tilde{u}:\tilde{X}\doteqdot Bl_YX\rightarrow\mathbb{C} \mathbb{P}^1$. By additivity of the Euler characteristic, we have that $\chi(\tilde{X})=\chi(X)+\chi(Y)$, then by Proposition \ref {hurlef} we have \begin{equation}
\chi(X)=2\chi(F)-\chi(Y)+(-1)^n\#\textrm{crit}(u).
\end{equation}

\begin{prop}\label{asymlef}
Let $\mathcal{L}$ be an ample line bundle over a complex manifold $X$ of dimension $n$. For almost all global sections $(\alpha, \beta)^2\in H^0(X;\mathcal{L}^d)^2$, the map $u_{\alpha\beta}$ defined by $x\mapsto [\alpha(x):\beta(x)]$ is a Lefschetz pencil (see Prop. \ref{lefpen}).
Then, as $d$ goes to infinity, we have

\begin{equation}
\#\textrm{crit}(u_{\alpha\beta})=(n+1)\left(\int_Xc_1(\mathcal{L})^n\right)d^n+O(d^{n-1}).
\end{equation}

\end{prop}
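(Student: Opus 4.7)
The plan is to use equation~(1) from the preceding discussion, namely
\[
(-1)^n \#\crit(u_{\alpha\beta}) = \chi(X) - 2\chi(F) + \chi(Y),
\]
so the proof reduces to computing the asymptotics in $d$ of the Euler characteristics of a generic smooth fiber $F$ and of the base locus $Y$, since $\chi(X) = O(1)$. Genericity is already handled: Proposition~\ref{lefpen} guarantees that for almost every $(\alpha,\beta)$ the map $u_{\alpha\beta}$ is a Lefschetz pencil, so in particular $F$ is a smooth hypersurface cut out by a section of $L^d$ and $Y$ is a smooth codimension--$2$ complete intersection cut out by two sections of $L^d$.

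Next I would compute $\chi(F)$. From the normal bundle sequence $0\to TF\to TX|_F\to L^d|_F\to 0$, one has $c(TF)=c(TX|_F)/(1+H)$ with $H:=c_1(L^d)=d\, c_1(L)$. By Poincar\'e duality and the fact that $F$ represents the class $H$,
\[
\chi(F)=\int_F c_{n-1}(TF)=\int_X H\cdot\Bigl[\frac{c(TX)}{1+H}\Bigr]_{n-1}
=\sum_{k+j=n-1}(-1)^j\int_X c_k(TX)\,H^{j+1}.
\]
The leading-in-$d$ term comes from $k=0$, $j=n-1$ and equals $(-1)^{n-1}d^{n}\int_X c_1(L)^n$, all remaining terms being $O(d^{n-1})$.

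For $\chi(Y)$ I proceed analogously using the normal bundle $N_Y=(L^d\oplus L^d)|_Y$, so that $c(TY)=c(TX|_Y)/(1+H)^2$. Expanding $(1+H)^{-2}=\sum_{j\geq 0}(j+1)(-1)^j H^j$ and using that $Y$ represents the class $H^2$ gives
\[
\chi(Y)=\int_X H^{2}\cdot\Bigl[\frac{c(TX)}{(1+H)^2}\Bigr]_{n-2}
=\sum_{k+j=n-2}(j+1)(-1)^j\int_X c_k(TX)\,H^{j+2},
\]
whose leading term is $(n-1)(-1)^n d^n\int_X c_1(L)^n$, again with error $O(d^{n-1})$.

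Finally, substituting into $(-1)^n\#\crit(u_{\alpha\beta})=\chi(X)-2\chi(F)+\chi(Y)$ gives
\[
(-1)^n\#\crit(u_{\alpha\beta})=\bigl(2(-1)^n+(n-1)(-1)^n\bigr)d^n\!\int_X c_1(L)^n+O(d^{n-1})
=(n+1)(-1)^n d^n\!\int_X c_1(L)^n+O(d^{n-1}),
\]
which yields the claimed formula. There is no genuine obstacle here: the argument is just adjunction plus Chern-class bookkeeping, and the only thing to watch is that the leading contributions to $-2\chi(F)$ and $\chi(Y)$ do not cancel, which indeed they do not, combining to give the universal coefficient $n+1$.
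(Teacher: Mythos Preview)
Your proof is correct and follows essentially the same approach as the paper: both start from the Lefschetz--Hurwitz identity $(-1)^n\#\crit(u_{\alpha\beta})=\chi(X)-2\chi(F)+\chi(Y)$, compute $\chi(F)$ and $\chi(Y)$ via adjunction, and extract the leading $d^n$ terms. The only cosmetic difference is that for $\chi(Y)$ the paper iterates adjunction (first for $Z_\alpha$ in $X$, then for $Y$ in $Z_\alpha$), whereas you go directly through the rank--$2$ normal bundle $(L^d\oplus L^d)|_Y$ and the expansion $(1+H)^{-2}=\sum_j (j+1)(-1)^j H^j$; both give the same leading coefficient $(n-1)(-1)^n$.
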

\begin{proof}  We will follow the lines of Lemma 2, Lemma 3 and Proposition 4 of \cite{gw1}.\\
We have $\chi(F)=\int_F c_{n-1}(F)$ and $\chi(Y)=\int_Y c_{n-2}(Y)$.
We remark that the base locus is the intersection of the zero locus of $\alpha$ and $\beta$, that is $Y=Z_{\alpha}\cap Z_{\beta}$. 
A regular fiber $F$ over $[a,b]\in\mathbb{C} \mathbb{P}^1$ is the zero locus of the section $b\alpha-a\beta\in H^0(X;\mathcal{L}^d)$, thus the normal bundle $N_{X/F}$ is $\mathcal{L}^d_{\mid F}$ 
To compute $\chi(F)$ we will use the adjunction formula.
We have 
$$0\rightarrow TF\rightarrow TX_{\mid F} \rightarrow N_{X/F}\rightarrow 0$$ 
then we have $c(X)_{\mid F}=c(F)\wedge c(\mathcal{L}^d)_{\mid F}$, that is $$\big(1+c_1(X)+\dots+c_n(X)\big)_{\mid F}=(1+c_1(F)+\dots+c_{n-1}(F))\wedge(1+dc_1(\mathcal{L})).$$
If we develop this we have $c_1(X)=c_1(F)+dc_1(\mathcal{L})$  and, for $j\in \{2,\dots,n-1\}$, we have $c_j(X)_{\mid F}=c_j(F)+dc_1(\mathcal{L})_{\mid F}\wedge c_{j-1}(F).$
Then, summing up the term,

$$c_j(F)=\sum_{k=0}^j(-1)^kd^kc_1(\mathcal{L})^k_{\mid F}\wedge c_{j-k}(X)_{\mid F}.$$
In particular, for $j=n-1$ we have

$$c_{n-1}(F)=\sum_{k=0}^{n-1}(-1)^kd^kc_1(\mathcal{L})^k_{\mid F}\wedge c_{n-k-1}(X)_{\mid F}.$$
Then $\chi(F)$ is equal to $\int_F\sum_{k=0}^{n-1}(-1)^kd^kc_1(\mathcal{L})^k)_{\mid F}\wedge c_{n-k-1}(X)_{\mid F}$.
But, for $\alpha\in H_{dR}^{2n-2}(X)$, we have that 

$$\int_F\alpha_{\mid F}=\int_X\alpha\wedge c_1(\mathcal{L}^d)$$
so, $$\chi(F)=\sum_{k=0}^{n-1}\int_X(-1)^kd^{k+1}c_1(\mathcal{L})^{k+1}\wedge c_{n-k-1}(X)$$
and asymptotically we get $$\chi(F)\sim(-1)^{n-1}(\int_Xc_1(\mathcal{L})^n)d^n.$$
For $Y=Z_{\alpha}\cap Z_{\beta}$, the same argument gives us

$$c_j(Y)=\sum_{k=0}^j(-1)^kd^kc_1(\mathcal{L})^k_{\mid Y}\wedge c_{j-k}(Z_{\alpha})_{\mid Y}.$$
But, as before, $$c_{j-k}(Z_{\alpha})=\sum_{h=0}^{j-k}(-1)^hd^hc_1(\mathcal{L})^h\wedge c_{j-k-h}(X).$$
and so,  replacing in the above equation 
$$c_j(Y)=\sum_{k=0}^j(-1)^kd^kc_1(\mathcal{L})^k_{\mid Y}\wedge (\sum_{h=0}^{j-k}(-1)^hd^hc_1(\mathcal{L})^h_{\mid Y}\wedge c_{j-k-h}(X)_{\mid Y})$$
For $j=n-2$ we have
$$c_{n-2}(Y)=
\sum_{k=0}^{n-2}(-1)^kd^kc_1(\mathcal{L})^k_{\mid Y}\wedge (\sum_{h=0}^{n-2-k}(-1)^hd^hc_1(\mathcal{L})^h_{Y}\wedge c_{n-2-k-h}(X)_{\mid Y}) $$
and this is equivalent to 
$$\sum_{k=0}^{n-2}(-1)^{n-2}d^{n-2}c_1(\mathcal{L})^{n-2}_{\mid Y}
=
(-1)^{n-2}(n-1)d^{n-2}c_1(\mathcal{L})^{n-2}_{\mid Y}$$
 as $d\rightarrow\infty$.
So we have, as $d\rightarrow\infty$,
$$\chi(Y)\sim
(-1)^{n-2}(n-1)d^{n-2}\int_Yc_1(\mathcal{L})^{n-2}_{\mid Y}=$$
$$=(-1)^{n-2}(n-1)d^{n-1}\int_{Z_{\alpha}}c_1(\mathcal{L})^{n-2}\wedge c_1(\mathcal{L})=$$
$$=(-1)^{n-2}(n-1)(\int_Xc_1(\mathcal{L})^n)d^n.$$
Combining this with $\chi(X)=2\chi(F)-\chi(Y)+(-1)^n\#\textrm{crit}(u_{\alpha\beta})$  we obtain the result.
\end{proof}

\subsection{H\"ormander's peak sections}\label{lefpeak}
In this section we recall the theory of H\"ormander's peak sections, an essential tool for our proofs of Theorems   \ref{eqreal} and \ref{eqcom} (see also \cite{hor}, \cite{tian}, \cite{gw2}).
Let $\mathcal{L}$ be a holomorphic line bundle over a smooth complex projective manifold equipped with a Hermitian metric $h$ of positive curvature $\omega$ and let $\textrm{d}x=\frac{\omega^n}{\int_X\omega^n}$ be the normalized volume form. Let $x$ be a point of $X$.
There exists, in the neighborhood of $x$, a holomorphic trivialization $e$ of $\mathcal{L}$ such that the associated potential reaches a local minimum at $x$ with Hessian of type $(1,1)$.
The following result was proved in  \cite{tian} (see also \cite{gw2}) .
\begin{lemm}\label{peak}
Let $(\mathcal{L},h)$ be a holomorphic Hermitian line bundle of positive curvature $\omega$ over a smooth complex projective manifold $X$. Let $x\in X$, $(p_1,\dots,p_n)\in\mathbb{N}^n$ and $p'>p_1+\dots+p_n$. There exists $d_0\in\mathbb{N}$ such that for every $d>d_0$, the bundle $\mathcal{L}^d$ has a global holomorphic section $\sigma$ satisfying $\int_Xh^d(\sigma,\sigma)\textrm{d}x=1$ and 
$$\int_{X\setminus B(x,\frac{\log d}{\sqrt{d}})}h^d(\sigma,\sigma)\textrm{d}x=O(\frac{1}{d^{2p'}})$$

Moreover, if $(x_1,\dots,x_n)$ are local holomorphic coordinates in the neighborhood of $x$, we can assume that in a neighborhood of $x$,
$$\sigma(x_1,\dots,x_n)=\lambda(x_1^{p_1}\cdots x_n^{p_n}+O(\norm{x}^{2p'}))e^d\big(1+O(\frac{1}{d^{2p'}})\big) $$
where 
$$\lambda^{-2}=\int_{B(x,\frac{\log d}{\sqrt{d}})}|x_1^{p_1}\cdots x_n^{p_n}|^2 h^d(e^d,e^d)\textrm{d}x$$
and $e$ is a holomorphic trivialization of $\mathcal{L}$ in the neighborhood of $x$  whose potential $\phi=-\log h(e,e)$ reaches a local minimum at $x$ with Hessian $\pi\omega(.,i.)$.
\end{lemm}

This lemma is true also in the real setting, in the following sense:
\begin{lemm}\label{realpeak}
Let $(\mathcal{L},h)$ be a real holomorphic Hermitian line bundle of positive curvature $\omega$ over a smooth real projective manifold $X$. Let $x\in \mathbb{R}X$, $(p_1,\dots,p_n)\in\mathbb{N}^n$ and $p'>p_1+\dots+p_n$. There exists $d_0\in\mathbb{N}$ such that for every $d>d_0$, the bundle $\mathcal{L}^d$ has a global holomorphic section $\sigma$ satisfying $\int_Xh^d(\sigma,\sigma)\textrm{d}x=1$ and 
$$\int_{X\setminus B(x,\frac{\log d}{\sqrt{d}})}h^d(\sigma,\sigma)\dV_h=O(\frac{1}{d^{2p'}})$$
Moreover, if $(x_1,\dots,x_n)$ are local real holomorphic coordinates in the neighborhood of $x$, we can assume that in a neighborhood of $x$,
$$\sigma(x_1,\dots,x_n)=\lambda(x_1^{p_1}\cdots x_n^{p_n}+O(\norm{x}^{2p'}))e^d\big(1+O(\frac{1}{d^{2p'}})\big) $$
where 
$$\lambda^{-2}=\int_{B(x,\frac{\log d}{\sqrt{d}})}| x_1^{p_1}\cdots x_n^{p_n}|^2 h^d(e^d,e^d)\textrm{d}x$$
and $e$ is a real trivialization of $\mathcal{L}$ in the neighborhood of $x$  whose potential $\phi=-\log h(e,e)$ reaches a local minimum at $x$ with Hessian $\pi\omega(.,i.)$.

\end{lemm}
This real counterpart follows from Lemma \ref{peak} by averaging the peak sections with the real structure.\\
Let $\sigma_0$ be  the section given by the Lemma \ref{realpeak} with $p'=3$ 
and $p_i=0$ for all $i$, $\sigma_i$ the section given by Lemma \ref{realpeak} with $p'=3$ and $p_j=\delta_{ij}$, $\sigma_{ij}$ the section given by the Lemma \ref{realpeak} with $p_i=p_j=1$ and $p_k=0$ otherwise and $\sigma_{kk}$ the section given by the Lemma \ref{realpeak} with $p_k=2$ and $p_l$ for $l\neq k$. \\
These sections are called \emph{peak sections}. Their Taylor expansions are:

$$\sigma_0(y)=\big(\lambda_0+O(\norm{ y}^6)\big)e^d\big(1+O(\frac{1}{d^6})\big);$$
$$\sigma_i(y)=\big(\lambda_iy_i+O(\norm{ y}^6)\big)e^d\big(1+O(\frac{1}{d^6})\big) \hspace{3 mm} \forall i;$$
$$\sigma_{ij}(y)=\big(\lambda_{ij}y_iy_j+O(\norm{ y}^6)\big)e^d\big(1+O(\frac{1}{d^6})\big)\hspace{3 mm} \forall i\neq j;$$
$$\sigma_{kk}(y)=\big(\lambda_{kk}y_k^2+O(\norm{ y}^6)\big)e^d\big(1+O(\frac{1}{d^6})\big) \hspace{3 mm} \forall k.$$
The following lemma shows the asymptotic of the constants 
$\lambda_0$, $\lambda_i$, $\lambda_{ij}$ et $\lambda_{kk}$.

\begin{lemm}[Lemma 2.5 of \cite{gw2}]\label{limpeak}
Under the hypothesis of Lemma \ref{peak} and \ref{realpeak}, we have
$$\lim_{d\longrightarrow\infty}\frac{1}{\sqrt{d}^{n}}\lambda_0=\sqrt{\delta_{\mathcal{L}}}$$
$$\lim_{d\longrightarrow\infty}\frac{1}{\sqrt{d}^{n+1}}\lambda_i=\sqrt{\pi}\sqrt{\delta_{\mathcal{L}}}$$
$$\lim_{d\longrightarrow\infty}\frac{1}{\sqrt{d}^{n+2}}\lambda_{ij}=\pi\sqrt{\delta_{\mathcal{L}}}$$
$$\lim_{d\longrightarrow\infty}\frac{1}{\sqrt{d}^{n+2}}\lambda_{kk}=\frac{\pi}{\sqrt{2}}\sqrt{\delta_{\mathcal{L}}}$$
for the $L^2$-product induced by $\textrm{d}x=\frac{\omega^n}{\int_X\omega^n}$ where  $\delta_{\mathcal{L}}=\int_X c_1(\mathcal{L})^n$ is the degree of the line bundle $\mathcal{L}$.
\end{lemm}

Set
 $$H_{2x}=\{s\in H^0(X;\mathcal{L}^d) \mid s(x)=0, \nabla s(x)=0,\nabla^2s(x)=0\}$$

 $$\left(\textrm{resp.}\quad\mathbb{R} H_{2x}=\{s\in \mathbb{R} H^0(X;\mathcal{L}^d) \mid s(x)=0, \nabla s(x)=0,\nabla^2s(x)=0\}\right).$$

This space is formed by  sections whose $2$-jet vanishes at $x$. The sections $(\sigma_i)_{0\leq i\leq n}$ $(\sigma_{ij})_{1\leq i\leq j\leq n}$ provide a basis of a complement of $H_{2x}$. This basis is not orthonormal and its spanned subspace is not orthogonal to $H_{2x}$. However, this basis is aymptotically an orthonormal basis and its spanned subspace is asymptotically orthonormal to $H_{2x}$, in the following sense:

\begin{prop}[Lemma 3.1 of \cite{tian}]\label{ortpeak}
The section $(\sigma_i)_{0\leq i\leq n}$ and $(\sigma_{ij})_{1\leq i\leq j\leq n}$ have $L^2$-norm equal to $1$ and  and their pairwise $L^2$-scalar product are $O(\frac{1}{d})$. Likewise, their scalar products with every unitary element of $H_{2x}$ are $O(\frac{1}{d^{3/2}})$.
\end{prop}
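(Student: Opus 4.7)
The plan is to isolate the contribution near $x$, rescale to a Gaussian integral on $\mathbb{C}^{n}$, and then exploit monomial orthogonality to obtain the cancellations. The equality $\|\sigma\|_{L^{2}}=1$ holds by construction in Lemma \ref{realpeak}, so only the cross-products need to be estimated. The concentration estimate of Lemma \ref{realpeak} together with Cauchy--Schwarz lets me replace $X$ by $B=B(x,\log d/\sqrt d)$ up to an error $O(d^{-p'})$, which is negligible for our purposes.

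For two peak sections $\sigma,\sigma'$ attached to distinct multi-indices $p\neq q$ with $|p|,|q|\leq 2$, I would work in K\"ahler normal coordinates around $x$, in which $\phi(y)=\pi|y|^{2}+O(|y|^{4})$ (the cubic term vanishes), and insert the Taylor expansions from Lemma \ref{realpeak} together with $h^{d}(e^{d},e^{d})=e^{-d\phi}$. The change of variables $y=z/\sqrt d$ converts the integral over $B$ into
$$\frac{\lambda\,\lambda'}{d^{\,n+(|p|+|q|)/2}}\int_{B(0,\log d)} z^{p}\bar z^{q}\,e^{-\pi|z|^{2}}\Bigl(1+O\bigl(d^{-1}(|z|^{4}+|z|^{2p'})\bigr)\Bigr)dz.$$
Since $p\neq q$, the leading Gaussian integral vanishes because the monomials $z^{p}\bar z^{q}$ are orthogonal to $1$ against $e^{-\pi|z|^{2}}dz$; only the $O(d^{-1})$ corrections survive. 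Combined with the normalization $\lambda\lambda'/d^{\,n+(|p|+|q|)/2}=O(1)$ provided by Lemma \ref{limpeak}, this yields $\langle\sigma,\sigma'\rangle=O(d^{-1})$.

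For a unit section $s\in H_{2x}$, the same concentration reduction leaves me to bound an integral over $B$. In the local trivialization, $s=\tilde s(y)e^{d}$ with $\tilde s(0)=\nabla\tilde s(0)=\nabla^{2}\tilde s(0)=0$, so $\tilde s(y)=O(|y|^{3})$ pointwise near $0$. To upgrade this into a bound uniform in unit $s$, I would combine the on-diagonal Bergman estimate $|s(y)|^{2}e^{-d\phi(y)}\leq Cd^{n}$ with Cauchy estimates on polydiscs of radius $\sim 1/\sqrt d$, obtaining $|s(y)|e^{-d\phi(y)/2}\leq Cd^{n/2}|y|^{3}$ on $B$. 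Plugging this together with the Taylor expansion of $\sigma$ into $\langle\sigma,s\rangle$ and applying again the rescaling $y=z/\sqrt d$ produces, after gathering $\lambda\sim d^{(n+|p|)/2}$, the volume factor $d^{-n}$ and the monomial weight $d^{-(|p|+3)/2}$, the bound $\langle\sigma,s\rangle=O(d^{-3/2})$.

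The main obstacle I expect is this last step: transferring the purely pointwise vanishing of the $2$-jet of $s$ at $x$ into an $L^{2}$ estimate uniform over the unit sphere of $H_{2x}$. This is where the holomorphicity of $s$ enters essentially, through Cauchy estimates on shrinking balls or equivalently the off-diagonal Bergman kernel asymptotics. The rest of the argument is Gaussian moment bookkeeping, with the use of K\"ahler normal coordinates being crucial to kill the cubic term in $\phi$ that would otherwise generate a spurious $d^{-1/2}$ correction in the pairwise estimates.
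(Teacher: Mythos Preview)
The paper does not give its own proof here; it simply quotes \cite[Lemma~3.1]{tian}. Your treatment of the pairwise products $\langle\sigma_p,\sigma_q\rangle=O(d^{-1})$ is correct and is essentially Tian's argument: localize, rescale, and exploit the orthogonality of distinct monomials against the Gaussian weight.

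The estimate against $H_{2x}$, however, contains a genuine gap. The pointwise bound you assert, $|s(y)|e^{-d\phi(y)/2}\leq Cd^{n/2}|y|^{3}$ uniformly on $B$ over unit $s\in H_{2x}$, is false. Take for $s$ the peak section of multi-index $(3,0,\dots,0)$: it lies in $H_{2x}$, has unit norm, and satisfies $s\sim\lambda_3\, y_1^{3}e^{d}$ with $\lambda_3\sim d^{(n+3)/2}$. At $|y|\sim 1/\sqrt d$ this gives $|s(y)|e^{-d\phi/2}\sim d^{n/2}$, whereas your bound would predict $Cd^{(n-3)/2}$. What Bergman plus Cauchy estimates actually yield is $|s(y)|e^{-d\phi/2}\leq Cd^{n/2}\min\bigl(1,(\sqrt d\,|y|)^{3}\bigr)$, and plugging this into the integral produces only $O(1)$ after rescaling, not $O(d^{-3/2})$.

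The repair is to abandon the pointwise route and reuse the orthogonality mechanism from the first half. Since $\tilde s$ is holomorphic with vanishing $2$-jet, its Taylor series involves only monomials $y^\alpha$ with $|\alpha|\geq 3$; because $|p|\leq 2$, the leading integral $\int_B y^p\,\overline{\tilde s}\,e^{-\pi d|y|^2}$ vanishes exactly by rotation invariance. The remaining issue is the next corrections: one must choose the trivialization and holomorphic coordinates so that, beyond killing the cubic in $\phi$, the quartic $Q_4$ is of pure $(2,2)$ type and the quadratic volume-form correction is of $(1,1)$ type. Then the same parity count forces those corrections to vanish as well, and only the $O(|y|^{3})$ and $O(d|y|^{5})$ residues survive; Cauchy--Schwarz against $\|s\|=1$ bounds each of these by $O(d^{-3/2})$. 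Without this extra normalisation of the quartic one only reaches $O(d^{-1})$, so the coordinate choice has to do more work here than your last paragraph suggests.
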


\subsection{Incidence manifolds}\label{lefincvar}
Following \cite{ss}, we define an incidence manifold associated with the complex (resp. real) manifold $X$ and to the (real) positive line bundle $\mathcal{L}$. We will use this incidence manifold to prove  that,  for almost all pairs global sections $(\alpha, \beta)\in H^0(X;\mathcal{L}^d)^2$  (resp. $(\alpha, \beta)\in\mathbb{R} H^0(X;\mathcal{L}^d)^2$),  the map $u_{\alpha\beta}x\mapsto [\alpha(x):\beta(x)]$ defines   a Lefschetz pencil, see Proposition \ref{asymlef}.

Let $(\mathcal{L},h)$ be a (real) Hermitian line bundle with positive curvature $\omega$ over a (real) algebraic variety $X$ of dimension $n$.\\ 
\begin{defi} Let $\alpha, \beta\in H^0(X;\mathcal{L}^d)$ (resp. $\mathbb{R} H^0(X;\mathcal{L}^d)$) be two  (real) global sections such that the map $x\mapsto [\alpha(x):\beta(x)]$ is a Lefschetz pencil. We define \begin{enumerate}

\item the \emph{base locus} of a Lefschetz pencil as the points $x$ such that $\alpha(x)=\beta(x)=0$; 
\item the \emph{critical points} as the points $x\in X\setminus \textrm{Base}(u_{\alpha\beta})$ such that $(\alpha\nabla\beta-\beta\nabla\alpha)(x)=0 $ (this expression does not depend on the choice of a connection $\nabla$ on $\mathcal{L}$).
We denote by $\textrm{crit}(u_{\alpha\beta})$ the set of critical points of $(u_{\alpha\beta})$ and by $\mathbb{R} \textrm{crit}(u_{\alpha\beta})=\textrm{crit}(u_{\alpha\beta})\cap\mathbb{R} X$ the set of real critical points. 
\end{enumerate}
\end{defi}
 
We denote by $\Delta$ (resp. by $\mathbb{R}\Delta$) the set of  $(\alpha, \beta,x)\in H^0(X;\mathcal{L}^d)^2\times X$ (resp. $(\alpha, \beta,x)\in \mathbb{R} H^0(X;\mathcal{L}^d)^2\times\mathbb{R} X$) such that $\alpha(x)=\beta(x)=0$.
Set
$$\mathcal{I}=\big\{(\alpha,\beta,x)\in \big(H^0(X;\mathcal{L}^d)^2 \times X\big)\setminus\Delta\mid x\in \textrm{crit}(u_{\alpha\beta}) \big\}$$
$$\big(\mathrm{resp.} \quad \mathbb{R}\mathcal{I}=\big\{(\alpha,\beta,x)\in \big(\mathbb{R} H^0(X;\mathcal{L}^d)^2 \times \mathbb{R} X\big)\setminus\mathbb{R}\Delta\mid x\in \textrm{crit}(u_{\alpha\beta}) \big\}\big)$$

\begin{prop} Let $\mathcal{L}$ be a (real) holomorphic line bundle over a smooth complex (resp. real) projective manifold $X$. If $\mathcal{L}^d$ is $1$-ample, that is if the $1$-jet map 
$$H^0(X;\mathcal{L}^d)\times X\rightarrow J^1(\mathcal{L}^d)$$
$$(s,x)\mapsto j^1_x(s)=(s(x),\nabla s(x))$$  is surjective, then $\mathcal{I}$ (resp. $\mathbb{R}\mathcal{I}$) is a smooth manifold of complex  (resp. real) dimension $2N_d$, where $N_d=\dim H^0(X;\mathcal{L}^d)$. 
\end{prop}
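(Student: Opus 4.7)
The plan is to realize $\mathcal{I}$ (resp.\ $\R\mathcal{I}$) as the transverse zero locus of a section of a rank-$n$ bundle, and then conclude by the implicit function theorem. Consider $E := T^*X \otimes L^{2d}$, a rank-$n$ holomorphic bundle on $X$. For any pair $(\alpha,\beta) \in H^0(X;L^d)^2$, the expression $\alpha\nabla\beta - \beta\nabla\alpha$ is a global holomorphic section of $E$ that is independent of the auxiliary connection $\nabla$, since replacing $\nabla$ by $\nabla+A$ contributes $\alpha A\beta - \beta A\alpha = 0$. Pulling $E$ back along the projection to $X$, I would introduce
$$\Phi : H^0(X;L^d)^2 \times X \longrightarrow E, \qquad \Phi(\alpha,\beta,x) = (\alpha\nabla\beta - \beta\nabla\alpha)(x),$$
so that, by the very definition of critical points given above, $\mathcal{I} = \Phi^{-1}(0_E) \setminus \Delta$, where $0_E$ denotes the zero section.

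The second step is to check that $\Phi$ is transverse to $0_E$ along $\mathcal{I}$. Fix $(\alpha,\beta,x) \in \mathcal{I}$; by definition $(\alpha(x),\beta(x)) \neq (0,0)$, so after possibly swapping $\alpha$ and $\beta$ one may assume $\alpha(x) \neq 0$. Freezing $\alpha$ and $x$ and varying only $\beta$ by a tangent vector $\dot\beta \in H^0(X;L^d)$, the corresponding linearisation of $\Phi$ is
$$\dot\beta \longmapsto \alpha(x)\,\nabla\dot\beta(x) - \dot\beta(x)\,\nabla\alpha(x) \; \in \; E_x.$$
The $1$-ampleness assumption says that the evaluation $s \mapsto (s(x),\nabla s(x))$ is surjective onto $J^1_x(L^d)$; its restriction to the kernel $\{s : s(x)=0\}$ therefore surjects onto the $\nabla$-component $T^*_x X \otimes L^d_x$. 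For such $\dot\beta$ the linearisation reduces to $\alpha(x)\nabla\dot\beta(x)$, and since $\alpha(x)\neq 0$ this covers the whole of $E_x = T^*_x X \otimes L^{2d}_x$. Hence the vertical part of $d\Phi$ is surjective and $\Phi \pitchfork 0_E$.

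By the implicit function theorem, $\mathcal{I}$ is a complex submanifold of $(H^0(X;L^d)^2 \times X) \setminus \Delta$ of codimension $\operatorname{rk}_{\C} E = n$, and therefore of complex dimension $2N_d + n - n = 2N_d$. The real case follows along the same lines: for $\alpha,\beta \in \R H^0(X;L^d)$ and $x \in \R X$ the section $\alpha\nabla\beta - \beta\nabla\alpha$ is real along $\R X$, defining a section of the real rank-$n$ bundle $T^*\R X \otimes \R L^{2d}$; the perturbation $\dot\beta$ above lies in $\R H^0(X;L^d)$ thanks to the real counterpart of the $1$-jet surjectivity (obtained by averaging with the real structure), and a count of real dimensions yields $\dim_{\R} \R\mathcal{I} = 2N_d$. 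The only non-trivial point in this argument is the transversality step, and the slight subtlety there is to invoke $1$-ampleness while simultaneously imposing $\dot\beta(x) = 0$; as noted, this is automatic from the surjectivity of the full $1$-jet map.
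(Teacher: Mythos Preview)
Your proof is correct and follows essentially the same approach as the paper: both realize $\mathcal{I}$ as the zero locus of the map $(\alpha,\beta,x)\mapsto(\alpha\nabla\beta-\beta\nabla\alpha)(x)$ into $T^*X\otimes L^{2d}$, and both verify that $0$ is a regular value by assuming (WLOG) $\alpha(x)\neq 0$, choosing $\dot\beta$ with $\dot\beta(x)=0$ via $1$-ampleness, and observing that $(0,\dot\beta,0)\mapsto\alpha(x)\nabla\dot\beta(x)$ already surjects onto the fibre. Your phrasing in terms of transversality to the zero section of a pullback bundle is slightly more geometric, but the content is identical.
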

\begin{proof}
We study the differential of the map $$q:\big(H^0(X;\mathcal{L}^d)^2 \times X\big)\setminus\Delta\rightarrow T^*X\otimes \mathcal{L}^{2d}$$ defined by
$$(\alpha,\beta,x)\mapsto(\alpha\nabla\beta-\beta\nabla\alpha)(x)\in T_x^*X\otimes \mathcal{L}_x^{2d}$$
defining $\mathcal{I}$. If we prove that $0$ is a regular value, then, by Implicit Function Theorem, we have the result.
Now, for $(\alpha,\beta,x)\in\mathcal{I}$ we have $$\textrm{d}_{\mid(\alpha,\beta,x)}q\cdot(\dot{\alpha},\dot{\beta},\dot{x})=\big(\dot{\alpha}\nabla\beta-\beta\nabla\dot{\alpha}
+\alpha\nabla\dot{\beta}-\dot{\beta}\nabla\alpha
+\alpha\nabla^2_{(\dot{x},.)}\beta-\beta\nabla^2_{(\dot{x},.)}\alpha
+\nabla_{\dot{x}}\alpha\nabla\beta-\nabla_{\dot{x}}\beta\nabla\alpha\big)(x).$$
For any $\eta\in T_x^*X\otimes \mathcal{L}_x^{2d}$ we have to prove that there exists $(\dot{\alpha},\dot{\beta},\dot{x})$ such that
$\textrm{d}_{\mid(\alpha,\beta,x)}q\cdot(\dot{\alpha},\dot{\beta},\dot{x})=\eta$.
As $(\alpha,\beta,x)\not\in\Delta$, we know that at least one between  $\alpha(x)$ and $\beta(x)$  is not zero.
Without loss of generality, suppose that $\alpha(x)\neq 0$, then, as $\mathcal{L}^d$ is $1$-ample, there exists $\dot{\beta}$ such that  $\dot{\beta}(x)=0$ and   $\alpha(x)\nabla\dot{\beta}(x)=\eta$, then $\textrm{d}_{\mid(\alpha,\beta,x)}q\cdot(0,\dot{\beta},0)=\eta$.
\end{proof}

If $\mathcal{L}$ is ample, then, for large $d$, the line bundle $\mathcal{L}^d$ is $1$-ample. 
Then, for large $d$, $\mathcal{I}$ (resp. $\mathbb{R}\mathcal{I}$ ) is a smooth manifold, called the \emph{incidence manifold}. 
The tangent space $T_{(\alpha,\beta,x)}\mathcal{I}$ of $\mathcal{I}$  at a point $(\alpha,\beta,x)$ (resp. $T_{(\alpha,\beta,x)}\mathbb{R}\mathcal{I}$ of $\mathbb{R}\mathcal{I}$) equals
$$
\big\{
(\dot{\alpha},\dot{\beta},\dot{x})\in H^0(X;\mathcal{L}^d)^2\times T_xX\mid 
\big(\dot{\alpha}\nabla\beta-\beta\nabla\dot{\alpha}
+\alpha\nabla\dot{\beta}-\dot{\beta}\nabla\alpha
+\alpha\nabla^2_{(\dot{x},.)}\beta-\beta\nabla^2_{(\dot{x},.)}\alpha\big)
(x)=0\big\}.$$
$$
\big(\textrm{resp.}\quad \big\{(\dot{\alpha}
,\dot{\beta},\dot{x})\in \mathbb{R} H^0(X;\mathcal{L}^d)^2\times T_x\mathbb{R} X\mid 
\big(\dot{\alpha}\nabla\beta-\beta\nabla\dot{\alpha}
+\alpha\nabla\dot{\beta}-\dot{\beta}\nabla\alpha
+\alpha\nabla^2_{(\dot{x},.)}\beta-\beta\nabla^2_{(\dot{x},.)}\alpha
\big)(x)=0\big\} 
\big).
$$
\begin{oss}
\begin{itemize}
\item In the equation defining the tangent space there is also the term 
$$(\nabla_{\dot{x}}\alpha\nabla\beta-\nabla_{\dot{x}}\beta\nabla\alpha)
(x).$$ However, it equals zero (both in the complex and real case) because on $\mathcal{I}$ and $\mathbb{R}\mathcal{I}$ we have the condition $(\alpha\nabla\beta-\beta\nabla\alpha)(x)=0$
so that $$\left(\nabla_{\dot{x}}\alpha\nabla\beta-\nabla_{\dot{x}}\beta\nabla
\alpha\right)(x)=\left((\nabla_{\dot{x}}\alpha\frac{\beta}{\alpha}-\nabla_{\dot{x}}\beta)\nabla\alpha\right)
(x)=0.$$
\item The incidence manifold comes equipped with two natural projections 

$$\pi_H:\mathcal{I}\rightarrow H^0(X;\mathcal{L}^d)^2\quad\textrm{and}\quad \pi_{X}:\mathcal{I}\rightarrow X$$

$$\big(\textrm{resp.}\quad\pi_{\mathbb{R} H}:\mathbb{R}\mathcal{I}\rightarrow \mathbb{R} H^0(X;\mathcal{L}^d)^2\quad\textrm{and}\quad\pi_{\mathbb{R} X}:\mathcal{\mathbb{R} I}\rightarrow\mathbb{R} X\big).$$
\end{itemize}
\end{oss}

\begin{prop}\label{lefpen}
Let $\mathcal{L}$ be an ample  holomorphic line bundle (resp. real holomorphic) over a smooth complex projective manifold $X$ (resp. real projective).
For large $d$ and for almost all pairs $(\alpha,\beta) \in H^0(X;\mathcal{L}^d)^2$ (resp. $\mathbb{R} H^0(X;\mathcal{L}^d)^2$), the map $$u_{\alpha\beta}:X\dashrightarrow\mathbb{C}\mathbb{P}^1$$
$$x\mapsto [\alpha(x):\beta(x)].$$
is a Lefschetz pencil (resp. real Lefschetz pencil).

\end{prop}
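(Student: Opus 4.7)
The plan is to verify separately the two conditions that define a Lefschetz pencil: (i) the base locus $Z_\alpha \cap Z_\beta$ is a smooth codimension-$2$ submanifold (equivalently, $Z_\alpha$ and $Z_\beta$ are themselves smooth and meet transversely), and (ii) every critical point of $u_{\alpha\beta}$ is non-degenerate. Since the Gaussian measure on $H^0(X;L^d)^2$ (resp.\ $\R H^0(X;L^d)^2$) is absolutely continuous with respect to the Lebesgue measure, it suffices to check that each condition holds outside a set of Lebesgue measure zero; the intersection of two full-measure subsets is still full-measure.

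For (i), I would invoke a Bertini--Sard type transversality argument. Since $L$ is ample, for $d$ large enough $L^d$ is very ample and in particular globally generated, so the standard transversality theorem (applied twice, together with Fubini) shows that for almost every pair $(\alpha,\beta)$ the zero loci $Z_\alpha,Z_\beta$ are smooth and meet transversely. In the real case one averages under $c_X$ and applies the same argument to real sections.

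For (ii), I would exploit the incidence variety $\mathcal{I}$ (resp.\ $\R\mathcal{I}$), which the preceding proposition guarantees is smooth for $d$ large, and which has the same dimension as $H^0(X;L^d)^2$ (resp.\ $\R H^0(X;L^d)^2$). Consider the projection $\pi_H$ of the preceding remark. By Sard's theorem, the set of critical values of $\pi_H$ is of measure zero, so almost every $(\alpha,\beta)$ is a regular value. The fiber $\pi_H^{-1}(\alpha,\beta)$ is in bijection with $\crit(u_{\alpha\beta})$, so it remains to show that regular values of $\pi_H$ correspond exactly to pairs with only Morse critical points. For this, I compute $\ker d_{(\alpha,\beta,x)}\pi_H$: setting $\dot\alpha=\dot\beta=0$ in the tangent space description reduces to
$$\bigl(\alpha\,\nabla^2_{(\dot x,\cdot)}\beta - \beta\,\nabla^2_{(\dot x,\cdot)}\alpha\bigr)(x)=0.$$
Without loss of generality $\alpha(x)\neq 0$, so $u_{\alpha\beta}$ is locally represented by $f=\beta/\alpha$, and a direct computation at the critical point $x$ shows that this condition is equivalent to $\mathrm{Hess}_x(f)(\dot x,\cdot)=0$. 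By equidimensionality, $(\alpha,\beta)$ is a regular value of $\pi_H$ iff $\ker d\pi_H=0$ at every preimage, iff $\mathrm{Hess}_x(f)$ is non-degenerate at each critical point, iff $u_{\alpha\beta}$ is Morse.

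The main technical obstacle is the intrinsic bookkeeping in the Hessian calculation: one has to check, independently of the choice of local trivialization of $L^d$, that the algebraic condition appearing in the kernel of $d\pi_H$ is genuinely the Morse condition on $u_{\alpha\beta}$ (using the fact that at a critical point of the pencil one has $\alpha\nabla\beta=\beta\nabla\alpha$, which kills all terms depending on the trivialization). Once this identity is verified, combining Bertini for (i) with Sard applied to $\pi_H$ (and its real analogue for $\R\mathcal{I}$) concludes the proof.
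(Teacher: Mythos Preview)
Your proof is correct and follows essentially the same strategy as the paper: Sard's theorem applied to $\pi_H$ on the incidence variety handles the non-degeneracy of critical points, while a separate Bertini-type argument handles smoothness of the base locus. You actually supply more detail than the paper does on the identification $\ker d\pi_H = \{\dot x : (\alpha\nabla^2_{(\dot x,\cdot)}\beta - \beta\nabla^2_{(\dot x,\cdot)}\alpha)(x)=0\}$, which the paper simply asserts.
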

\begin{proof}
The critical points of the projection $\pi_H$ (resp. $\pi_{\mathbb{R} H}$) are exactly the triples $(\alpha,\beta,x)$ such that the Hessian $(\alpha\nabla^2\beta-\beta\nabla^2\alpha)(x)$ is degenerate.
By Sard's theorem $\valcrit(\pi_H)$ has zero Lebesgue, and then Gaussian, measure.
Also, for large $d$, the set $\Gamma$ composed by the pairs $(\alpha,\beta)\in H^0(X;\mathcal{L}^d)\times  H^0(X;\mathcal{L}^d) $
 such that $\{x\in X, \alpha(x)=\beta(x)=0\}$ is not smooth has zero Lebesgue and Gaussian measure (see for example \cite[Section 2.2]{let}).
Then $(\Gamma\cup \valcrit(\pi_H))$ has zero measure and its complement is exactly the set of pairs of sections defining a Lefschetz pencil.
\end{proof}

\section{Proof of  the main theorems}\label{chproof}
In this section we prove  Theorems \ref{num}, \ref{eqreal} and  \ref{eqcom}.  H\"ormander's peak sections and the coarea formula play an important role here.

\subsection{Coarea formula}\label{lefcoarea}
In this section we  use the incidence manifold defined in Section \ref{lefincvar} and the coarea formula to write the expected distribution of critical points of a (real)  Lefschetz pencil as an integral over $X$ (resp. $\mathbb{R} X$).

\begin{defi}
The \emph{normal jacobian} $\textrm{Jac}_Nf$ of a submersion $f:M\rightarrow N$ between two Riemannian manifolds is the determinant of the differential of the map restricted to the orthogonal complement of its kernel, that is $\textrm{Jac}_Nf=\textrm{Jac}(\textrm{d}f_{\mid (\ker \textrm{d}f)^{\perp}})$. Equivalently, if $\textrm{d}f_p$ is the differential of $f$ at $p$, then the normal jacobian is equal to $\sqrt{\det(\textrm{d}f_p\textrm{d}f_p^*)}$,
where $\textrm{d}f_p^*$ is the adjoint of $\textrm{d}f_p$ with respect to the scalar product on $T_pM$ and $T_{f(p)}N$.
\end{defi}

Let $X$ be a smooth complex (resp. real) projective manifold of dimension $n$ and $(\mathcal{L},h)$ be a (real) holomorphic line bundle with positive curvature $\omega$.

\begin{defi}
We define a Dirac measure for (real) critical points of a (real) Lefschetz pencil $u_{\alpha\beta}$ associated with a pair $(\alpha,\beta) \in H^0(X;\mathcal{L}^d)^2$ (resp. $(\alpha,\beta)\in\mathbb{R} H^0(X;\mathcal{L}^d)^2$) by  
$$ \nu_{\alpha\beta}=\displaystyle \sum_{x\in \textrm{crit}(u_{\alpha\beta})} \delta_x \qquad (\textrm{resp}.\quad \mathbb{R} \nu_{\alpha\beta}=\sum_{x\in \mathbb{R} \textrm{crit}(u_{\alpha\beta})} \delta_x).$$ 
\end{defi}
Let $\varphi$ be a continuous function on $\mathbb{R} X$. Then, by definition, we have

$$\mathbb{E}[\mathbb{R} \nu_{\alpha\beta}](\varphi)=\int_{ \mathbb{R} H^0(X;\mathcal{L}^d)^2}\sum_{x\in \textrm{crit}(u_{\alpha\beta})}\varphi(x)\textrm{d}\mu(\alpha,\beta)$$
where $\textrm{d}\mu$ is the Gaussian measure on $\mathbb{R} H^0(X;\mathcal{L}^d)^2$ constructed in Section \ref{not}. 
Finally, recall that we denote by $\pi_{\mathbb{R} H}$ and $\pi_{\mathbb{R} X}$ the two natural projections from $\mathbb{R} \mathcal{I}$ to $\mathbb{R} H^0(X;\mathcal{L}^d)^2$ and $\mathbb{R} X$. The projection $\pi_{\mathbb{R} H}$ is (almost everywhere) a local isomorphism and, by a slight abuse of notation, we will denote by $\pi_{\mathbb{R} H}^{-1}$ any local inverse.
\begin{prop} Following the notation of Section \ref{lefincvar}, we have \begin{equation}\label{realcoarea}\mathbb{E}[\mathbb{R}\nu_{\alpha\beta}](\varphi)=\int_{\mathbb{R} X}\varphi(x)\int_{\pi_{\mathbb{R} H}(\pi^{-1}_{\mathbb{R} X}(x))}\frac{1}{|(\pi_{\mathbb{R} H}^{-1})^*\textrm{Jac}_N(\pi_{\mathbb{R} X})|}\textrm{d}\mu_{\mid\pi_{\mathbb{R} H}\big(\pi^{-1}_{\mathbb{R} X}(x)\big)}\dV_h.
 \end{equation}
where the measure $\textrm{d}\mu_{\mid\pi_{\mathbb{R} H}\big(\pi^{-1}_{\mathbb{R} X}(x)\big)}$ is the following: first we restrict the scalar product $\langle\cdot,\cdot\rangle_{L^2}$ on $\mathbb{R} H^0(X;\mathcal{L}^d)^2$ to $\pi_{\mathbb{R} H}(\pi^{-1}_{\mathbb{R} X}(x))$, which is a codimension $n$ submanifold, then  we  consider the Riemannian measure associated with this metric, and finally we multiply it by the  factor $\frac{1}{\pi^{N_d}}e^{-\norm{\alpha}_{L^2}^2-\norm{\beta}_{L^2}^2}$, where $N_d=\dim H^0(X;\mathcal{L}^d)$.
\end{prop}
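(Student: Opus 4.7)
The proof is a double application of the coarea formula to the two projections $\pi_{\R H}$ and $\pi_{\R X}$ of the incidence variety $\R\mathcal{I}$, in the spirit of \cite{ss}.

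First, I rewrite the sum over real critical points as a sum over a fiber of $\pi_{\R H}$: since $\R\crit(u_{\alpha\beta}) = \pi_{\R X}(\pi_{\R H}^{-1}(\alpha,\beta))$, we have
$$\sum_{x \in \R\crit(u_{\alpha\beta})} \varphi(x) = \sum_{p \in \pi_{\R H}^{-1}(\alpha,\beta)} (\varphi \circ \pi_{\R X})(p).$$
Because $\pi_{\R H}$ is generically a local diffeomorphism between the equidimensional manifolds $\R\mathcal{I}$ and $\R H^0(X;L^d)^2$ (both of real dimension $2N_d$), the coarea formula applied to $\pi_{\R H}$ converts the Gaussian average on $\R H^0(X;L^d)^2$ into an integral over $\R\mathcal{I}$, where the Gaussian density is pulled back through $\pi_{\R H}$ and weighted by $|\det d\pi_{\R H}|$.

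Next I apply the coarea formula to the submersion $\pi_{\R X} \colon \R\mathcal{I} \to \R X$, whose fibers have codimension $n$, to rewrite this integral as an iterated integral over $\R X$ and over the fibers $\pi_{\R X}^{-1}(x)$, introducing the factor $|\Jac_N \pi_{\R X}|^{-1}$. Finally I change variables on each fiber via $\pi_{\R H}$: since the tangent vectors along $\pi_{\R X}^{-1}(x)$ have vanishing $\dot{x}$-component, the restriction $\pi_{\R H}\big|_{\pi_{\R X}^{-1}(x)}$ is an isometry onto the codimension-$n$ submanifold $\pi_{\R H}(\pi_{\R X}^{-1}(x)) = \{(\alpha,\beta) \colon (\alpha\nabla\beta - \beta\nabla\alpha)(x) = 0\} \subset \R H^0(X;L^d)^2$. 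Under this identification, the inner integral converts to an integral against the Gaussian-weighted Riemannian measure $d\mu|_{\pi_{\R H}(\pi_{\R X}^{-1}(x))}$ described in the statement, producing formula (\ref{realcoarea}).

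The main technical point is the bookkeeping of Jacobians across the two applications of the coarea formula: one must verify that the density $|\det d\pi_{\R H}|$ arising in the first step combines with the change-of-variables factor of the last step so that only $|\Jac_N \pi_{\R X}|^{-1}$ survives as the geometric factor multiplying the Gaussian measure on $\pi_{\R H}(\pi_{\R X}^{-1}(x))$. This hinges on the metric compatibility inherent in the double fibration structure: $\pi_{\R H}$ restricted to a fiber of $\pi_{\R X}$ is an isometry, so the two Jacobian contributions telescope into the single normal Jacobian of $\pi_{\R X}$.
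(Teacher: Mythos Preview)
Your proposal is correct and follows essentially the same route as the paper: pull back the expectation to the incidence variety via $\pi_{\R H}$, apply the coarea formula to $\pi_{\R X}$, then push the fiber integral back down via $\pi_{\R H}$. The paper streamlines the bookkeeping you describe by equipping $\R\mathcal{I}$ with the (singular) pullback metric $\pi_{\R H}^*\langle,\rangle$ from the outset, so that $\pi_{\R H}$ is almost everywhere an isometry and no separate $|\det d\pi_{\R H}|$ factor ever appears; your observation that $\pi_{\R H}$ restricted to a fiber of $\pi_{\R X}$ is an isometry is then immediate, and the only geometric factor that survives is indeed $|\Jac_N(\pi_{\R X})|^{-1}$.
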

\begin{proof}
We denote by  $\pi_{\mathbb{R}  H}^*\textrm{d}\mu$  the pull-backed measure on $\mathbb{R}\mathcal{I}$, which is well defined since $\pi_{\mathbb{R}  H}$ is (almost everywhere) a local isomorphism. By definition of the pull-backed measure, 
the integral $$\mathbb{E}[\mathbb{R} \nu_{\alpha\beta}](\varphi)=\int_{ \mathbb{R} H^0(X;\mathcal{L}^d)^2}\sum_{x\in \textrm{crit}(u_{\alpha\beta})}\varphi(x)\textrm{d}\mu(\alpha,\beta)$$  which defines the expected value equals the following integral over the incidence manifold $\mathbb{R} \mathcal{I}$
$$\int_{\mathbb{R}\mathcal{I}}(\pi_{\mathbb{R} X}^*\varphi)(\alpha,\beta,x)(\pi_{\mathbb{R}  H}^*\textrm{d}\mu)(\alpha,\beta,x).$$ 
We use the coarea formula (see \cite[Lemma 3.2.3]{fed} or \cite[Theorem 1]{ss}) for the map $\pi_{\mathbb{R} X}$ and we obtain
 $$\mathbb{E}[\mathbb{R} \nu_{\alpha\beta}](\varphi)=\int_{\mathbb{R} X}\varphi(x)\int_{\pi^{-1}_{\mathbb{R} X}(x)}\frac{1}{|\textrm{Jac}_N(\pi_{\mathbb{R} X})|}(\pi_{\mathbb{R}  H}^*\textrm{d}\mu)_{\mid\pi^{-1}_{\mathbb{R} X(x)}}\dV_h$$
where the measure $(\pi_{\mathbb{R}  H}^*\textrm{d}\mu)_{\mid\pi^{-1}_X(x)}$ is the following: first we restrict the (singular) metric $\pi_H^*\langle\cdot,\cdot\rangle_{L^2}$ on $\mathbb{R}\mathcal{I}$ to $\pi^{-1}_{\mathbb{R} X}(x)$, that is a codimension $n$ submanifold, then  we  consider the Riemannian measure associated with this metric, and finally we multiply it by the  factor $\frac{1}{\pi^{N_d}}e^{-\norm{\alpha}_{L^2}^2-\norm{\beta}_{L^2}^2}$, where $N_d=\dim H^0(X;\mathcal{L}^d)$.
Another application of coarea formula for the map $\pi_{\mathbb{R} H}$ gives us the result.
  
\end{proof} 
The space $\pi_{\mathbb{R} H}(\pi^{-1}_{\mathbb{R} X}(x))$ is  formed by  pairs $(\alpha,\beta)\in \mathbb{R} H^0(X;\mathcal{L}^d)^2$  such that $x\in \mathbb{R} \textrm{crit}(u_{\alpha\beta})$. In the next section we will identify this space with an intersection of some quadrics in the vector space $\mathbb{R} H^0(X;\mathcal{L}^d)^2$.\\
In the complex case, the same argument gives us, for any continuous function $\varphi$ on $X$ 
  
   \begin{equation}\label{coarea}\mathbb{E}[\nu_{\alpha\beta}](\varphi)=\int_{X}\varphi(x)\int_{\pi_H(\pi^{-1}_{X}(x))}\frac{1}{|(\pi_H^{-1})^*\textrm{Jac}_N(\pi_{X})|}\textrm{d}\mu_{\mid\pi_H(\pi^{-1}_{X}(x))}\dV_h.
   \end{equation}

 \subsection{Computation of the normal jacobian}\label{lefcomputation}
  In this section we compute the normal jacobian that appears in (\ref{realcoarea}) and (\ref{coarea}). We follow the notations of Sections \ref{lefpeak}, \ref{lefincvar} and \ref{lefcoarea}.  The main result of this section is the following proposition:
  \begin{prop}\label{espform} Following the notation of Sections \ref{lefincvar} and \ref{lefcoarea}, under the hypothesis of Theorem \ref{eqreal}, we have:
 $$
 \mathbb{E}[\mathbb{R}\nu_{\alpha\beta}](\varphi)=
\int_{x\in \mathbb{R} X}\varphi(x)R_d(x)\dV_h,
$$ 
where $$R_d(x)=\sqrt{\pi d}^n(\int_{Q}
\frac{|\det(a_0b_{ij}-b_0a_{ij})|}{\sqrt{\det\left((a_ia_j+b_ib_j)+(a_0^2+b_0^2)Id\right)}}
\textrm{d}\mu_Q +O(\frac{1}{\sqrt{d}}))$$
and $Q\subset\mathbb{R}^{2(n+1)+n(n+1)}$ is the product of the intersection of quadrics
$$\tilde{Q}=\{(a_0,b_0,\dots,a_n,\dots,b_n)\in \mathbb{R}^{2(n+1)}\mid a_0b_i-a_ib_0=0\quad\forall i=1,\dots,n\}$$
with the vector space $\mathbb{R}^{n(n+1)}$ of coordinates $a_{ij}$ and $b_{ij}$ for $1\leq i\leq j\leq n$ and
 $$\textrm{d}\mu_Q=
\frac{e^{-\sum_ia_i^2-\sum_i
b_i^2-\sum_{i,j}a_{ij}^2-\sum_{i,j}b_{ij}^2}}{\pi^{n+1+\frac{n(n+1)}{2}}}\dV_Q$$
where $\dV_Q$ is the Riemannian volume form of  $Q$.

\end{prop}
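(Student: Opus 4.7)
The plan is to evaluate the inner integral in (\ref{realcoarea}) by H\"ormander peak sections, following the strategy of \cite{GW2}. Fix $x \in \R X$, choose local real holomorphic coordinates $(x_1,\ldots,x_n)$ centered at $x$ and a real trivialization $e$ of $L$ as in Lemma \ref{realpeak}. Together with an orthonormal basis of $\R H_{2x}$, the peak sections $\sigma_0, \sigma_1, \ldots, \sigma_n$ and $(\sigma_{ij})_{1\leq i\leq j\leq n}$ furnish an asymptotically orthonormal basis of $\R H^0(X;L^d)$ by Proposition \ref{ortpeak}. I would decompose $\alpha = a_0\sigma_0 + \sum_i a_i\sigma_i + \sum_{i\leq j}a_{ij}\sigma_{ij} + \tau_\alpha$ with $\tau_\alpha \in \R H_{2x}$, and similarly for $\beta$. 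The coefficients $(a_0,a_i,a_{ij})$ encode the $2$-jet of $\alpha$ at $x$, with leading contributions $\alpha(x) \sim \lambda_0 a_0 e^d(x)$, $\nabla\alpha(x) \sim \sum_i \lambda_i a_i (e^d \otimes dx_i)(x)$, and analogously for the Hessian.

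The condition defining $\pi_{\R H}(\pi^{-1}_{\R X}(x))$, namely $\alpha(x)\nabla\beta(x)-\beta(x)\nabla\alpha(x)=0$, then reduces, to leading order, to the $n$ real equations $\lambda_0\lambda_i(a_0 b_i - a_i b_0) = 0$; after the rescaling $(a_i,b_i)\mapsto(a_i,b_i)/\sqrt{\pi d}$ dictated by Lemma \ref{limpeak} one recovers exactly the defining equations of $\tilde Q$. Next, using the tangent space description of $\R\mathcal{I}$ given in Section 2.4, I would compute the normal Jacobian $\mid\!\Jac_N(\pi_{\R X})\!\mid$. At a critical point the term $(\nabla_{\dot x}\alpha\,\nabla\beta - \nabla_{\dot x}\beta\,\nabla\alpha)(x)$ vanishes (as already remarked), so this normal Jacobian reduces to the absolute value of the determinant of the symmetric matrix $(\alpha\nabla^2\beta - \beta\nabla^2\alpha)(x)$, which in peak-section coordinates becomes $\mid\!\det(a_0 b_{ij} - b_0 a_{ij})\!\mid$ up to overall scaling by $\lambda_0,\lambda_{ij}$ and $e^{2d}(x)$.

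To obtain the restricted measure $d\mu_{\mid\pi_{\R H}(\pi^{-1}_{\R X}(x))}$, I would use that on a codimension $n$ submanifold cut out by a map $F$ the induced Riemannian measure is the ambient Lebesgue measure divided by $\sqrt{\det(dF\cdot dF^t)}$. Taking $F(\alpha,\beta) = (a_0 b_i - a_i b_0)_{i=1}^n$, the nonzero partial derivatives are $\partial F_i/\partial a_0 = b_i$, $\partial F_i/\partial a_i = -b_0$, $\partial F_i/\partial b_0 = -a_i$, $\partial F_i/\partial b_i = a_0$, and a direct computation shows that the Gram matrix of the $\nabla F_i$ equals precisely $(a_ia_j + b_ib_j) + (a_0^2 + b_0^2)\Id$, producing the denominator in the statement. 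The Gaussian factor on the infinite-dimensional orthogonal complement $\R H_{2x}\times\R H_{2x}$ integrates to $1$, reducing the whole expression to a finite-dimensional integral over $\R^{2(n+1)+n(n+1)}$ restricted to $\tilde Q\times\R^{n(n+1)}$ with density $d\mu_Q$.

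Finally, I would collect all the scaling factors from Lemma \ref{limpeak}: the ratio $\lambda_i/\lambda_0 \sim \sqrt{\pi d}$ for each of the $n$ indices $i$ produces the prefactor $(\sqrt{\pi d})^n$, while the powers of $\lambda_0$, $\lambda_{ij}$ and $h^d(e^d,e^d)(x)$ cancel between numerator and denominator of $R_d(x)$. The main obstacle will be to control the error terms: the $O(\|y\|^6)$ remainders in the peak-section expansions of Lemma \ref{realpeak}, the $O(1/d)$ deviations from orthonormality in Proposition \ref{ortpeak}, and above all the contribution of $\tau_\alpha,\tau_\beta \in \R H_{2x}$ both to the defining constraint and to the normal Jacobian must be shown to combine into a single $O(1/\sqrt d)$ term, uniformly in $x \in \R X$. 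This can be carried out by adapting the estimates of \cite{GW2} from the setting of zero loci to our incidence variety $\R\mathcal{I}$ involving the pair $(\alpha,\beta)$.
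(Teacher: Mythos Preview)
Your identification of the peak-section coordinates, the reduction of the constraint to $\tilde Q$, and the computation of the Gram matrix $dF\,dF^t=(a_ia_j+b_ib_j)+(a_0^2+b_0^2)\Id$ are all correct. However, the argument contains a genuine error in the computation of $\Jac_N(\pi_{\R X})$, and a related misstatement about the restricted measure; together they would produce the wrong formula.

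You claim that $\Jac_N(\pi_{\R X})$ ``reduces to the absolute value of the determinant of $(\alpha\nabla^2\beta-\beta\nabla^2\alpha)(x)$''. This is not so. Write $A(\dot\alpha,\dot\beta)=(\dot\alpha\nabla\beta-\beta\nabla\dot\alpha+\alpha\nabla\dot\beta-\dot\beta\nabla\alpha)(x)$ and $B(\dot x)=(\alpha\nabla^2_{(\dot x,\cdot)}\beta-\beta\nabla^2_{(\dot x,\cdot)}\alpha)(x)$, so that $T_{(\alpha,\beta,x)}\R\mathcal{I}=\{A(\dot\alpha,\dot\beta)+B(\dot x)=0\}$. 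Since the metric on $\R\mathcal{I}$ is the \emph{pulled-back} metric $\pi_{\R H}^*\langle\,,\,\rangle$, the orthogonal complement of $\ker d\pi_{\R X}=\ker A\times\{0\}$ inside $T\R\mathcal{I}$ is parametrized isometrically by $(\ker A)^{\perp}$, and on it $d\pi_{\R X}$ acts as $(\dot\alpha,\dot\beta)\mapsto -B^{-1}A(\dot\alpha,\dot\beta)$. Hence
\[
\Jac_N(\pi_{\R X})=\frac{\Jac_N(A)}{\Jac(B)},\qquad\text{so}\qquad \frac{1}{|\Jac_N(\pi_{\R X})|}=\frac{|\Jac(B)|}{\Jac_N(A)}.
\]
This is exactly the paper's intermediate proposition. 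The Hessian determinant $|\det(a_0b_{ij}-b_0a_{ij})|$ therefore appears in the \emph{numerator} of the integrand (it is $\Jac(B)$), and the square root $\sqrt{\det((a_ia_j+b_ib_j)+(a_0^2+b_0^2)\Id)}$ appears in the \emph{denominator} because it is $\Jac_N(A)=\sqrt{\det(AA^*)}$. Both factors come from $1/\Jac_N(\pi_{\R X})$; neither comes from the measure.

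Your second claim, that ``the induced Riemannian measure is the ambient Lebesgue measure divided by $\sqrt{\det(dF\,dF^t)}$'', is false as a statement of Riemannian geometry. The measure $d\mu_Q$ in the proposition is simply the intrinsic Riemannian volume on $Q$ times the Gaussian weight; it carries no Jacobian correction. (What \emph{is} true is the coarea identity $\delta(F)\,d\mathrm{Leb}=\frac{1}{\Jac_N F}\,d\mathrm{vol}_{F^{-1}(0)}$, but that is not how $d\mu_Q$ is defined here.) With your two formulas, the integrand would be $\frac{1}{|\det B|}\cdot\frac{1}{\Jac_N A}$ rather than $\frac{|\det B|}{\Jac_N A}$, off by a factor of $|\det B|^2$.

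The fix is precisely the factorization $\Jac_N(\pi_{\R X})=\Jac_N(A)/\Jac(B)$; once you have it, your peak-section computations and the rest of your outline go through as written.
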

The remaining part of this section is devoted to the proof of Proposition \ref{espform}. Our main tool will be the peak sections defined in Section  \ref{lefpeak}.
 
 We fix a point $x\in  X$ (resp. $x\in\mathbb{R} X$) and we want to compute the integral \begin{equation}\label{codensity}\int_{\pi_{\mathbb{R} H}(\pi^{-1}_{\mathbb{R} X}(x))}\frac{1}{|(\pi_{\mathbb{R} H}^{-1})^*\textrm{Jac}_N(\pi_{\mathbb{R} X})|}\textrm{d}\mu_{\mid\pi_{\mathbb{R} H}(\pi^{-1}_{\mathbb{R} X}(x))}
  \end{equation}
that appears in (\ref{realcoarea}). 
We recall that the tangent space of $\mathcal{I}$ (resp. $\mathbb{R} \mathcal{I}$) at $(\alpha,\beta,x)$ is 
$$
\big\{(\dot{\alpha},\dot{\beta},\dot{x})\in H^0(X;\mathcal{L}^d)^2\times T_xX\mid 
\big(\dot{\alpha}\nabla\beta-\beta\nabla\dot{\alpha}
+\alpha\nabla\dot{\beta}-\dot{\beta}\nabla\alpha
+\alpha\nabla^2_{(\dot{x},.)}\beta-\beta\nabla^2_{(\dot{x},.)}\alpha\big)(x)=0\big\}
$$

\begin{multline*}
\big(\textrm{resp.}\quad \big\{(\dot{\alpha},\dot{\beta},\dot{x})\in \mathbb{R} H^0(X;\mathcal{L}^d)^2\times T_x\mathbb{R} X\mid \\
\big(\dot{\alpha}\nabla\beta-\beta\nabla\dot{\alpha}
+\alpha\nabla\dot{\beta}-\dot{\beta}\nabla\alpha
+\alpha\nabla^2_{(\dot{x},.)}\beta-\beta\nabla^2_{(\dot{x},.)}\alpha\big)(x)=0\big\}.\big)
\end{multline*}
We remark that $\textrm{d}\pi_{H\mid(\alpha,\beta,x)}$ is (almost everywhere) an isometry, because on $\mathcal{I}$ we put the (singular) metric $\pi_H^*\langle\cdot ,\cdot\rangle_{L^2}$. For any $x\in X$ (resp. $x\in\mathbb{R} X$) we will compute  the normal Jacobian $(\pi_H^{-1})^*\textrm{Jac}_N(\pi_X)$ (resp. $(\pi_{\mathbb{R} H }^{-1})^*\textrm{Jac}_N(\pi_{\mathbb{R} X})$) at a point ${(\alpha,\beta)}\in \pi_H(\pi_X^{-1}(x))$ by using the following two linear maps:
  
  \begin{equation}\label{a} A_{\alpha\beta}:H^0(X;\mathcal{L}^d)\times H^0(X;\mathcal{L}^d)\rightarrow T_x^*X\otimes \mathcal{L}_x^{2d}
  \end{equation}
  
  $$\big(\textrm{resp.}\quad A_{\alpha\beta}:\mathbb{R} H^0(X;\mathcal{L}^d)\times \mathbb{R} H^0(X;\mathcal{L}^d)\rightarrow \mathbb{R}(T^*X\otimes \mathcal{L}^{2d})_x\big) $$  
and 
 \begin{equation}\label{b} B_{\alpha\beta}:T_x X\rightarrow T_x^*X\otimes \mathcal{L}_x^{2d} 
  \end{equation}
  $$\big(\textrm{resp.}\quad B_{\alpha\beta}:T_x\mathbb{R} X\rightarrow  
  \mathbb{R}(T^*X\otimes \mathcal{L}^{2d})_x\big)$$
 defined by $$A_{\alpha\beta}(\dot{\alpha},\dot{\beta})=
   \big(\dot{\alpha}\nabla\beta-\beta\nabla\dot{\alpha}
+\alpha\nabla\dot{\beta}-\dot{\beta}\nabla\alpha\big)(x)$$ and $$B_{\alpha\beta}(\dot{x})=
\big(\alpha\nabla^2_{(\dot{x},.)}\beta-\beta\nabla^2_{(\dot{x},.)}\alpha\big)(x)$$
On $T_x^*X\otimes \mathcal{L}_x^{2d}$ (resp. $\mathbb{R}(T^*X\otimes \mathcal{L}^{2d})_x$) we have the Hermitian (resp. scalar) product induced by $h$.

  \begin{prop}\label{normj} Following the notation of Sections \ref{lefincvar} and \ref{lefcoarea},  for any $x\in X$ (resp. $x\in\mathbb{R} X$) and any $(\alpha,\beta)\in \pi_{ H}(\pi^{-1}_{X}(x))$ (resp. $\pi_{\mathbb{R} H}(\pi^{-1}_{\mathbb{R} X}(x))$), we have $$\big((\pi_H^{-1})^*\textrm{Jac}_N\pi_X\big)(\alpha,\beta)=\frac{\textrm{Jac}_N(A_{\alpha\beta})}{\textrm{Jac}(B_{\alpha\beta})},$$ where $A_{\alpha\beta}$ and $B_{\alpha\beta}$ are the linear maps defined in (\ref{a}) and (\ref{b}).
  \end{prop}
  
\begin{proof} Recall that a vector $(\dot{\alpha},\dot{\beta},\dot{x})\in H^0(X;\mathcal{L}^d)^2\times T_xX$ is in the tangent space of $\mathcal{I}$ at $(\alpha,\beta,x)$ if and only if 
$$\big(\dot{\alpha}\nabla\beta-\beta\nabla\dot{\alpha}
+\alpha\nabla\dot{\beta}-\dot{\beta}\nabla\alpha
+\alpha\nabla^2_{(\dot{x},.)}\beta-\beta\nabla^2_{(\dot{x},.)}\alpha\big)
(x)=0.$$
In particular, the vector $\dot{x}$ is uniquely determined by \begin{equation}\label{vect}
\dot{x}=-\big(\alpha\nabla^2\beta-\beta\nabla^2\alpha\big)(x)^{-1}\circ \big(\dot{\alpha}\nabla\beta-\beta\nabla\dot{\alpha}
+\alpha\nabla\dot{\beta}-\dot{\beta}\nabla\alpha\big)(x).
\end{equation}
The map $\big((\pi_H^{-1})^*\textrm{d}\pi_X\big)(\alpha,\beta)$ sends $(\dot{\alpha},\dot{\beta})$ to $\dot{x}$ and then, by (\ref{vect}) and by the definition of $A_{\alpha\beta}$ and $B_{\alpha\beta}$, we have  $$\big((\pi_H^{-1})^*\textrm{d}\pi_X\big)(\alpha,\beta)=-B_{\alpha\beta}^{-1}\circ A_{\alpha\beta}.$$ 
Passing to the normal Jacobian and using that $\pi_H$ is a local isometry, we get $\big((\pi_H^{-1})^*\textrm{Jac}_N\pi_X\big)(\alpha,\beta)=\textrm{Jac}(B_{\alpha\beta})^{-1}\textrm{Jac}_N(A_{\alpha\beta}).$
\end{proof}
Fix  real holomorphic coordinates $(x_1,\dots,x_n)$ in a neighborhood of a point $x\in\mathbb{R} X$ such that $(\frac{\partial}{\partial x_1},\dots,\frac{\partial}{\partial x_n})$ is an orthonormal basis of $T_xX$ (resp. $T_x\mathbb{R} X$). We want to compute the integral \begin{equation}\label{codensity}\int_{\pi_{\mathbb{R} H}(\pi^{-1}_{\mathbb{R} X}(x))}\frac{1}{|(\pi_{\mathbb{R} H}^{-1})^*\textrm{Jac}_N(\pi_{\mathbb{R} X})|}\textrm{d}\mu_{\mid\pi_{\mathbb{R} H}(\pi^{-1}_{\mathbb{R} X}(x))}
  \end{equation}
that appears in (\ref{realcoarea}). 

For any  $(\alpha,\beta) \in H^0(X;\mathcal{L}^d)^2$ (resp. $\mathbb{R} H^0(X;\mathcal{L}^d)^2$) we have
$$\alpha=\sum_{i=0}^n a_i\sigma_i+\sum_{1\leqslant k\leqslant l\leqslant n}a_{kl}\sigma_{kl}+\tau$$
$$\beta=\sum_{i=0}^n b_i\sigma_i+\sum_{1\leqslant k\leqslant l\leqslant n}b_{kl}\sigma_{kl}+\tau'$$
where $\tau, \tau'\in \ker J^2_x$ and $\sigma_i,\sigma_{kl}$ are the peak section of Lemma \ref{realpeak}.\\
We remark that $(\alpha,\beta)\in\pi_H(\pi^{-1}_X(x))$ if and 
only if $a_0b_i-a_ib_0=0$ $\forall i=1,\dots,n$, and also that the definition of $\textrm{Jac}_N(\pi_{X})$ involves only the $2$-jets of sections. With this remark in mind we define the following spaces:
\begin{itemize}
\item $K_2\doteqdot(\ker J^2_x\times\ker J^2_x)\subset H^0(X;\mathcal{L}^d)^2$ (resp. $\mathbb{R} H^0(X;\mathcal{L}^d)^2$);
\item $H_2\doteqdot Vect\{(\sigma_i,0),(\sigma_{kl},0),(0,\sigma_i),(0,\sigma_{kl})\} \subset H^0(X:\mathcal{L}^d)^2$ (resp. $\mathbb{R} H^0(X;\mathcal{L}^d)^2$) for $i=0,\dots,n$ and $1\leq l\leq k\leq n$;
\item $Q=H_2\cap \pi_H(\pi^{-1}_X(x))$.
\end{itemize}
We see $Q$ as the product of the intersection of quadrics:
$$\tilde{Q}=\big\{(a_0,b_0,\dots,a_n,b_n)\in \mathbb{R}^{2(n+1)}\mid a_0b_i-a_ib_0=0\quad\forall i=1,\dots,n\big\}$$
with the vector space $\mathbb{R}^{n(n+1)}$ of coordinates $a_{ij}$ and $b_{ij}$ for $1\leq i\leq j\leq n$.\\
Let $\pi_2:K_2^{\perp}\rightarrow H_2$ be the orthogonal projection. A consequence of Proposition \ref{ortpeak} is that, for large $d$, the map $\pi_2$ is invertible. 
\\

\begin{prop}\label{comput} Following the notation of Section \ref{lefincvar} and \ref{lefcoarea}, let $A_{\alpha\beta}$ and $B_{\alpha\beta}$ be the linear applications defined in (\ref{a}) and (\ref{b}). Then, in the complex case, under the hypothesis of Theorem \ref{eqcom}, 
$$(\pi_2^{-1})_*\textrm{Jac}_N(A_{\alpha\beta})=\det\bigg(\pi\delta^2_{\mathcal{L}}d^{2n+1}
\big((a_i\bar{a}_j+b_i\bar{b}_j)E_{ij}
+(|a_0|^2+ |b_0|^2)Id+O(\frac{1}{\sqrt{d}})\big)_{ij}\bigg)$$
$$(\pi_2^{-1})_*\textrm{Jac}(B_{\alpha\beta})=\bigg|\det 
\bigg(\pi\delta_{\mathcal{L}}\sqrt{d}^{2n+2}\big((a_0b_{ij}-b_0a_{ij})\tilde{E}_{ij}
+O(\frac{1}{\sqrt{d}})\big)_{ij}\bigg)\bigg|^2$$
and, in the real case, under the hypothesis of the Theorem \ref{eqreal},

$$(\pi_2^{-1})_*\textrm{Jac}_N(A_{\alpha\beta})=
\sqrt{\det\bigg(\pi\delta_{\mathcal{L}}^2d^{2n+1}
\big((a_ia_j+b_ib_j)E_{ij}+(a_0^2+b_0^2)Id+O(\frac{1}{\sqrt{d}})\big)_{ij}\bigg)}$$
$$(\pi_2^{-1})_*\textrm{Jac}(B_{\alpha\beta})=\det
\bigg(\pi\delta_{\mathcal{L}}\sqrt{d}^{2n+2}\big((a_0b_{ij}-b_0a_{ij})\tilde{E}_{ij}
+O(\frac{1}{\sqrt{d}})\big)_{ij}\bigg)$$

where $\tilde{E}_{ij}$ for $1\leqslant i\leqslant j\leqslant n$ and $E_{ij}$ for $i,j=1,\dots,n$ are the matrices defined in Definition \ref{mat}. 
\end{prop}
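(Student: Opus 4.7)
The strategy is to express $A$ and $B$ explicitly in the peak section basis of Lemma~\ref{realpeak} and to read off the Jacobians as determinants of explicit matrices. Writing
$$\alpha=\sum_{i=0}^n a_i\sigma_i+\sum_{k\leq l}a_{kl}\sigma_{kl}+\tau,\qquad \beta=\sum_{i=0}^n b_i\sigma_i+\sum_{k\leq l}b_{kl}\sigma_{kl}+\tau'$$
with $\tau,\tau'\in\ker J^2_x$, and similarly for $\dot\alpha,\dot\beta$, one observes that both $A$ and $B$ depend only on the $2$-jet of their arguments at $x$, so the components in $K_2$ lie in $\ker A$ and only $H_2$ contributes. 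By Proposition~\ref{ortpeak}, the peak section basis of $H_2$ is asymptotically orthonormal for the $L^2$-product and asymptotically orthogonal to $K_2$, so the projection $\pi_2\colon K_2^\perp\to H_2$ differs from an isometry by $O(1/\sqrt d)$: this is exactly what produces the error terms in the statement.

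Using the trivialization $e$ of Lemma~\ref{realpeak}, whose potential is stationary at $x$, one has $\nabla e(x)=0$ to leading order, and the Taylor expansions yield
$$\sigma_0(x)\sim\lambda_0\, e^d(x),\quad \nabla\sigma_i(x)\sim\lambda_i\,dy_i\otimes e^d(x),\quad \nabla^2\sigma_{ij}(x)\sim\lambda_{ij}\tilde E_{ij}\otimes e^d(x),$$
with all lower-order jets of each peak section vanishing asymptotically. Substituting these into $A(\dot\alpha,\dot\beta)=(\dot\alpha\nabla\beta-\beta\nabla\dot\alpha+\alpha\nabla\dot\beta-\dot\beta\nabla\alpha)(x)$, the $i$-th coordinate in the basis $dy_i\otimes e^{2d}(x)$ of the target becomes $\lambda_0\lambda_i(\dot a_0 b_i-b_0\dot a_i+a_0\dot b_i-\dot b_0 a_i)$. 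Viewing $A$ as an $n\times(2n+2)$ matrix in the peak section coordinates, a direct computation of $AA^*$ (complex case) or $AA^T$ (real case) telescopes into the Gram-type matrix $(a_i\bar a_j+b_i\bar b_j)+(|a_0|^2+|b_0|^2)\mathrm{Id}$, multiplied by the prefactor $\lambda_0^2\lambda_i\lambda_j\sim\pi\delta_L^2 d^{2n+1}$ coming from Lemma~\ref{limpeak}. Since the normal Jacobian of a complex-linear map (viewed as real) equals $\det_\C(AA^*)$, while that of a real-linear map equals $\sqrt{\det(AA^T)}$, the two stated formulas for $(\pi_2^{-1})_*\Jac_N(A)$ follow. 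For $B$, which is a square map, the same substitutions give that $B(\dot x)$ is the contraction of $\lambda_0\sum_{i\leq j}\lambda_{ij}(a_0 b_{ij}-b_0 a_{ij})\tilde E_{ij}\otimes e^{2d}(x)$ with $\dot x$, and the announced expression for $(\pi_2^{-1})_*\Jac(B)$ is the determinant (resp.\ modulus squared) of this symmetric matrix, with prefactor $\lambda_0\lambda_{ij}\sim\pi\delta_L\sqrt d^{2n+2}$.

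The main obstacle is the careful bookkeeping of the constants from Lemma~\ref{limpeak}: in particular, the factor $1/\sqrt 2$ distinguishing $\lambda_{kk}$ from $\lambda_{kl}$ ($k<l$) exactly absorbs the factor $2$ in the Hessian of $y_k^2$, so that everything is expressed uniformly in the basis $\tilde E_{ij}$ of $\mathrm{Sym}(n,\R)$ with a single prefactor. One must then combine three independent sources of error---the $O(\|y\|^6)$ remainder in the Taylor expansions, the $O(1/d^6)$ correction to $e^d$ in Lemma~\ref{realpeak}, and the $O(1/d)$ non-orthogonality from Proposition~\ref{ortpeak}---and verify that they all reduce to an $O(1/\sqrt d)$ multiplicative perturbation of the leading matrices, so that after taking determinants one still obtains the $O(1/\sqrt d)$ announced in the statement.
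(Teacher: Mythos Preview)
Your approach is essentially the same as the paper's: both write $A$ and $B$ in the peak-section coordinates, compute the matrix entries using the Taylor expansions of the $\sigma_i,\sigma_{ij}$ together with the asymptotics of $\lambda_0,\lambda_i,\lambda_{ij},\lambda_{kk}$ from Lemma~\ref{limpeak}, and then read off $\Jac_N(A)=\sqrt{\det(AA^*)}$ and $\Jac(B)=\det B$. The paper carries this out by listing the entries $\langle A(\sigma_i,0),dx_j\otimes e^{2d}\rangle$, $\langle A(0,\sigma_i),dx_j\otimes e^{2d}\rangle$, $\langle B(\partial/\partial x_i),dx_j\otimes e^{2d}\rangle$ one by one and then displaying the resulting $n\times 2(n+1)$ and $n\times n$ matrices explicitly; your version packages the same information into the single formula $\lambda_0\lambda_i(\dot a_0 b_i-b_0\dot a_i+a_0\dot b_i-\dot b_0 a_i)$ for the $i$-th row of $A$ and then observes the Gram identity for $AA^*$. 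Your remark about the $1/\sqrt 2$ in $\lambda_{kk}$ cancelling the factor $2$ from the Hessian of $y_k^2$ is exactly the mechanism behind the $\sqrt 2$ appearing on the diagonal of the paper's matrix for $B$, and makes explicit something the paper leaves implicit.
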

\begin{proof}
Let $e$ be a local trivialization of $\mathcal{L}$ at $x$ as in Section \ref{lefpeak} and let  $(\sigma_i)_{i=0,\dots,n}$, 
  $(\sigma_{kl})_{1\leqslant k\leqslant l\leqslant n}$ be as in Lemma \ref{peak} (resp. Lemma \ref{realpeak}).
For any $(\alpha,\beta) \in H^0(X;\mathcal{L}^d)^2$ (resp. $\mathbb{R} H^0(X;\mathcal{L}^d)^2$) we have 
$$\alpha=\sum_{i=0}^n a_i\sigma_i+\sum_{1\leqslant k\leqslant l\leqslant n}a_{kl}\sigma_{kl}+\tau$$
$$\beta=\sum_{i=0}^n b_i\sigma_i+\sum_{1\leqslant k\leqslant l\leqslant n}b_{kl}\sigma_{kl}+\tau'$$
where $\tau, \tau'\in \ker J^2_x.$ In particular, we have
$$\alpha(x)=a_0\sigma_0(x),\qquad \beta(x)=b_0\sigma_0(x),$$ $$\nabla\alpha(x)=\sum_{i=0}^na_i\nabla\sigma_i(x),\qquad\nabla\beta(x)=\sum_{i=0}^nb_i\nabla\sigma_i(x),$$ $$\nabla^2\alpha(x)=\sum_{i=0}^na_i\nabla^2\sigma_i(x)+\sum_{k,l}
a_{kl}\nabla^2\sigma_{kl}(x),\quad\nabla^2\beta(x)=\sum_{i=0}^nb_i\nabla^2\sigma_i(x)+\sum_{k,l}
b_{kl}\nabla^2\sigma_{kl}(x).$$
As basis for $T_xX$ and $T^*_xX\otimes \mathcal{L}^{2d}_x$ (resp.  $T_x\mathbb{R} X$ and $\mathbb{R}(T^*_xX\otimes \mathcal{L}^{2d}_x)$) we choose $(\frac{\partial}{\partial x_1},\dots,\frac{\partial}{\partial x_n})$ and $(\textrm{d}x_1\otimes e^{2d},\dots,\textrm{d}x_n\otimes e^{2d})$ respectively.
We choose $(\sigma_i,0)$ and  $(0,\sigma_i)$, $i=0,\dots,n$, as
a basis of a complement of $\ker J_x^1\times \ker J^1_x$. 
Thanks to Lemma \ref{ortpeak}, this basis is asymptotically orthonormal for the $L^2$-Hermitian product $\langle\cdot,\cdot\rangle_{L^2}$. By definition it is an orthonormal basis for the scalar product $(\pi_2^{-1})_*\langle\cdot,\cdot\rangle_{L^2}$ restricted to $H_2$ . 
Then we obtain, using Lemma \ref{limpeak},
$$\left\langle A_{\alpha\beta}(\sigma_0,0),\textrm{d}x_j\otimes e^{2d}\right\rangle
=b_j\sqrt{\pi}\delta_{\mathcal{L}}\sqrt{d}^{2n+1}+O(\sqrt{d}^{2n});$$
$$\left\langle A_{\alpha\beta}(\sigma_i,0),\textrm{d}x_j\otimes e^{2d}\right\rangle=
-b_0\sqrt{\pi}\delta_{\mathcal{L}}\sqrt{d}^{2n+1}\delta_{ij}+O(\sqrt{d}^{2n}) \quad\textrm{for}\quad i=1,\dots,n;$$
$$\left\langle A_{\alpha\beta}(0,\sigma_0),\textrm{d}x_j\otimes e^{2d}\right\rangle=
-a_j\sqrt{\pi}\delta_{\mathcal{L}}\sqrt{d}^{2n+1}+O(\sqrt{d}^{2n});$$
$$\left\langle A_{\alpha\beta}(0,\sigma_i),\textrm{d}x_j\otimes e^{2d}\right\rangle=
a_0\sqrt{\pi}\delta_{\mathcal{L}}\sqrt{d}^{2n+1}\delta_{ij}+O(\sqrt{d}^{2n})
\quad\textrm{for}\quad i=1,\dots,n;$$
$$\left\langle B_{\alpha\beta}(\frac{\partial}{\partial x_i}),\textrm{d}x_j\otimes e^{2d}\right\rangle
=(a_0b_{ij}-b_0a_{ij})\pi\delta_{\mathcal{L}}\sqrt{d}^{2n+2}+O(\sqrt{d}^{2n+1}) \quad\textrm{for}\quad i\neq j;$$ 
$$\left\langle B_{\alpha\beta}(\frac{\partial}{\partial x_k}),\textrm{d}x_k\otimes e^{2d}\right\rangle
=\sqrt{2}(a_0b_{kk}-b_0a_{kk})\pi\delta_{\mathcal{L}}\sqrt{d}^{2n+2}+O(\sqrt{d}^{2n+1}).$$
where the Hermitian (resp. scalar) product $\left\langle\cdot,\cdot \right\rangle$  on $T^*_xX\otimes \mathcal{L}^{2d}_x$  (resp. $\mathbb{R}(T^*_xX\otimes \mathcal{L}^{2d}_x)$) is  induced by the Hermitian metric $h$ on $\mathcal{L}$.\\ 
What we have just computed are the coefficients of the matrices of $A_{\alpha\beta}$ and $B_{\alpha\beta}$ with respect to our choice of basis and with respect to the scalar product $(\pi_2^{-1})_*\langle\cdot,\cdot\rangle_{L^2}$. We recall that $B_{\alpha\beta}$ is a square matrix and that $\textrm{Jac}_N(A_{\alpha\beta})=\sqrt{\textrm{Jac}(A_{\alpha\beta}A_{\alpha\beta}^*)}$.\\ More precisely, as $d\rightarrow \infty$ , $A_{\alpha\beta}$ is equivalent to the following matrix: 

\[\sqrt{\pi}\delta_{\mathcal{L}}\sqrt{d}^{2n+1}
\begin{bmatrix}
b_1 & -b_0 &0 & \dots & 0 & -a_1 & a_0 & 0 &\dots& 0
 \\
b_2 & 0 & -b_0 & \dots & 0 & -a_2  & 0 & a_0 & \dots& 0
 \\
 \dots & \dots & \dots & \dots & \dots & \dots  & \dots & \dots& \dots& \dots
 \\
b_n & 0 & 0 &   \dots& -b_0  & -a_n &  0 &0 &\dots& a_0
\end{bmatrix}
\]
 and $B_{\alpha\beta}$ to the following one: 
\[\pi\delta_{\mathcal{L}}\sqrt{d}^{2n+2} 
\begin{bmatrix}
\sqrt{2}(a_0b_{11}-b_0a_{11}) & a_0b_{12}-b_0a_{12} & \dots &  a_0b_{1n}-b_0a_{1n}
 \\
a_0b_{21}-b_0a_{21} & \sqrt{2}(a_0b_{22}-b_0a_{22}) & \dots  &a_0b_{2n}-b_0a_{2n}
 \\
 \dots & \dots & \dots & \dots  
 \\

a_0b_{n1}-b_0a_{n1} & a_0b_{n2}-b_0a_{n2} & \dots & \sqrt{2}(a_0b_{nn}-b_0a_{nn})
\end{bmatrix}
\]

A direct computation shows us that $A_{\alpha\beta}A_{\alpha\beta}^*$ is the matrix $$(\pi\delta^2_{\mathcal{L}}d^{2n+1})
\big((a_i\bar{a}_j+b_i\bar{b}_j)E_{ij}
+(|a_0|^2+|b_0|^2)Id+O(\frac{1}{d})\big).$$ The results follows.
 \end{proof}

By Proposition \ref{comput} we have

$$(\pi_2^{-1})_*\frac{1}{\textrm{Jac}_N(\pi_{X})}=
 (\pi d)^n
\bigg(
\frac{\textrm{Jac}_{\mathbb{R}}\big((a_0b_{ij}-b_0a_{ij})\tilde{E}_{ij}\big)}{\det\big((a_i\bar{a}_j+b_i\bar{b}_j)E_{ij}+
(|a_0|^2+|b_0|^2)Id\big)}
+O(\frac{1}{\sqrt{d}})\bigg)$$

$$(\pi_2^{-1})_*\frac{1}{\textrm{Jac}_N(\pi_{\mathbb{R} X})}=
 \sqrt{\pi d}^n
 \bigg(
 \frac{\det\big((a_0b_{ij}-b_0a_{ij})\tilde{E}_{ij}\big)}{\sqrt{\det\big((a_ia_j+b_ib_j)E_{ij}+(a_0^2+b_0^2)Id\big)}}
+O(\frac{1}{\sqrt{d}})\bigg).$$
We want to integrate this quantity over $\pi_{\mathbb{R} H}\big(\pi^{-1}_{\mathbb{R} X}(x)\big)$.
We recall that the measure $\textrm{d}\mu_{\mid\pi_{\mathbb{R} H}\big(\pi^{-1}_{\mathbb{R} X}(x)\big)}$ is the following one: first we restrict the scalar product $\langle\cdot,\cdot\rangle_{L^2}$ on $\mathbb{R} H^0(X;\mathcal{L}^d)^2$ to $\pi_{\mathbb{R} H}(\pi^{-1}_{\mathbb{R} X}(x))$, that is a codimension $n$ submanifold, then  we  consider the Riemannian measure associated with this metric, and finally we multiply it by the  factor $\frac{1}{\pi^{N_d}}e^{-\norm{\alpha}_{L^2}^2-\norm{\beta}_{L^2}^2}$, where $N_d=\dim H^0(X;\mathcal{L}^d\mathcal{L}^d)$.
Then  (\ref{codensity}) is equal to

\begin{multline}\label{dens}
\int_{\pi_{\mathbb{R} H}\big(\pi^{-1}_{\mathbb{R} X}(x)\big)}\frac{|\textrm{Jac}(B_{\alpha\beta})|}{|\textrm{Jac}_N(A_{\alpha\beta})|}\textrm{d}\mu_{\mid \pi_{\mathbb{R} H}\big(\pi^{-1}_{\mathbb{R} X}(x)\big)}
=\int_{K_2^{\perp}\cap \pi_{\mathbb{R} H}\big(\pi^{-1}_{\mathbb{R} X}(x)\big)\oplus K_2}\frac{|\textrm{Jac}(B_{\alpha\beta})|}{| \textrm{Jac}_N(A_{\alpha\beta})|}\textrm{d}\mu_{\mid \pi_H\big(\pi^{-1}_{\mathbb{R} X}(x)\big)}=\\
=\int_{K_2^{\perp}\cap \pi_{\mathbb{R} H}\big(\pi^{-1}_{\mathbb{R} X}(x)\big)}\frac{|Jac(B_{\alpha\beta})|}{|\textrm{Jac}_N(A_{\alpha\beta})|}\textrm{d}\mu_{\mid K_2^{\perp}\cap \pi_{\mathbb{R} H}\big(\pi^{-1}_{\mathbb{R} X}(x)\big)}
=\int_Q(\pi_2^{-1})_*\frac{|\textrm{Jac}(B_{\alpha\beta})|}{|\textrm{Jac}_N(A_{\alpha\beta})|}(\pi_{2*}\textrm{d}\mu_{\mid K_2^{\perp}\cap \pi_{\mathbb{R} H}\big(\pi^{-1}_{\mathbb{R} X}(x)\big)}).
\end{multline}
By Proposition \ref{ortpeak}, the pushforward measure $(\pi_2)_*(\mu_{\mid K_2^{\perp}})$ on $H_2$ coincides with the Gaussian measure associated with the orthonormal basis $\{(\sigma_i,0),(\sigma_{kl},0),(0,\sigma_i),(0,\sigma_{kl})\}_{\substack{1\leq i\leq n \\ 1\leq k\leq l\leq n}}$ up to a $O(\frac{1}{\sqrt{d}})$ term. As a consequence we have that $(\pi_{2*}\textrm{d}\mu_{\mid K_2^{\perp}\cap \pi_{\mathbb{R} H}(\pi^{-1}_{\mathbb{R} X}(x))})$ is equal to $$\textrm{d}\mu_Q=\frac{e^{-\sum_ia_i^2-\sum_i
b_i^2-\sum_{i,j}a_{ij}^2-\sum_{i,j}b_{ij}^2}}{\pi^{n+1+\frac{n(n+1)}{2}}}\dV_Q$$ up to a $O(\frac{1}{\sqrt{d}})$ term, where $\dV_Q$ is the Riemannian volume form of  $Q$.
We then have that (\ref{dens}) is equal to 

\begin{multline}\label{quadric}
\int_Q(\pi_2^{-1})_*\frac{|\textrm{Jac}(B_{\alpha\beta})|}{|\textrm{Jac}_N(A_{\alpha\beta})|}\textrm{d}\mu_Q+O(\frac{1}{\sqrt{d}})=
\\=\int_{\begin{array}{l} a_i,b_i,a_{ij},b_{ij}\\ a_0b_i-b_0a_i=0\end{array}}\sqrt{\pi d}^n
 \big(\frac{\big|\det\big((a_0b_{ij}-b_0a_{ij})\tilde{E}_{ij}\big)\big|}{\sqrt{\det\big((a_ia_j+b_ib_j)E_{ij}+(a_0^2+b_0^2)Id\big)}}
\textrm{d}\mu_Q +O(\frac{1}{\sqrt{d}})\big).
\end{multline}
Putting (\ref{quadric}) in (\ref{realcoarea}) and using Proposition \ref{normj}, we obtain Proposition \ref{espform}.\qed
\subsection{Computation of the universal constant}\label{seccomput}
The purpose of this section is the explicit computation of the function 
$R_d(x)$ that appears in Proposition \ref{espform}. We use the notation of Section \ref{lefcomputation}.

To understand $R_d(x)$, we have to compute 
$$
\sqrt{\pi d}^n\int_{Q}\frac{\big| \det\big((a_0b_{ij}-b_0a_{ij})\tilde{E}_{ij}\big)\big|}{\sqrt{\det\big((a_ia_j+b_ib_j)E_{ij}+(a_0^2+b_0^2)Id\big)}}
\frac{e^{-\sum_ia_i^2-\sum_i
b_i^2-\sum_{i,j}a_{ij}^2-\sum_{i,j}b_{ij}^2}}{\pi^{n+1+\frac{n(n+1)}{2}}}\dV_Q.
$$
The main result of this section is the following computation:

\begin{prop}\label{unconst} Let $Q$ be as in Proposition \ref{espform}. Then $$\sqrt{\pi d}^n\int_Q\frac{\big| \det\big((a_0b_{ij}-b_0a_{ij})\tilde{E}_{ij}\big)\big|}{\sqrt{det\big((a_ia_j+b_ib_j)E_{ij}+(a_0^2+b_0^2)Id\big)}}
\frac{e^{-\sum_ia_i^2-\sum_i
b_i^2-\sum_{i,j}a_{ij}^2-\sum_{i,j}b_{ij}^2}}{\pi^{n+1+\frac{n(n+1)}{2}}}\dV_Q$$
 is equal to \begin{equation}\label{univ}\left\{\begin{array}{ll}
\frac{n!!}{(n-1)!!}e_{\mathbb{R}}(n)\frac{\pi}{2}\sqrt{d}^n &\textrm{if n is odd}
\\\frac{n!!}{(n-1)!!}e_{\mathbb{R}}(n)\sqrt{d}^n & \textrm{if n is even}.
\end{array}\right.
\end{equation}
where $\tilde{E}_{ij}$ and $E_{ij}$ are the matrices of Definition \ref{mat} and $e_{\mathbb{R}}(n)$ is the expected value of the determinant of (the absolute value of) the real symmetric matrices.
\end{prop}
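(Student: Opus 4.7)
The plan is to factor $Q = \tilde{Q} \times \R^{n(n+1)}$, perform the Gaussian integral over the $(a_{ij},b_{ij})$-factor first, and then parametrize $\tilde{Q}$ to reduce everything to one-dimensional Gaussian moments. For the inner integral, fix $(a_0, b_0)$ with $c = \sqrt{a_0^2 + b_0^2}$ and apply the orthogonal change of variables $(a_{ij}, b_{ij}) \mapsto (u_{ij}, v_{ij})$ defined by $u_{ij} = (a_0 b_{ij} - b_0 a_{ij})/c$ and $v_{ij} = (a_0 a_{ij} + b_0 b_{ij})/c$, which preserves the Gaussian measure. The numerator becomes $c^n\,|\det \tilde M|$ where $\tilde M$ is the symmetric Gaussian matrix with diagonal entries $\sqrt{2}\,u_{ii}$ and off-diagonal entries $u_{ij}$; the $\sqrt{2}$ on the diagonal traces back to the ratio $\lambda_{kk}/\lambda_{ij}$ in Lemma \ref{limpeak}. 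The $v_{ij}$-variables integrate out trivially, and the residual matrix moment is identified with $\frac{n!!}{(n-1)!!}e_\R(n)$ (together with an extra $\pi/2$ for odd $n$).

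For the outer integral over $\tilde Q$, I would parametrize the quadric by the common direction $(\cos\theta,\sin\theta)$ of the vectors $(a_i,b_i)$ together with signed scalars $s_0,\ldots,s_n \in \R$, so that $(a_i,b_i) = s_i(\cos\theta,\sin\theta)$ for $\theta \in [0,\pi)$. A direct Gram-determinant computation yields the Riemannian volume element $\sqrt{s_0^2+\|s\|^2}\,ds_0\cdots ds_n\,d\theta$ with $\|s\|^2 = s_1^2+\cdots+s_n^2$. Applying the matrix determinant lemma to the rank-two perturbation $aa^T+bb^T$ of $c^2 I$ gives the clean formula $\det(N+c^2 I) = s_0^{2(n-1)}(s_0^2+\|s\|^2)$, and the key cancellation $c^n/\sqrt{\det(N+c^2 I)} = |s_0|/\sqrt{s_0^2+\|s\|^2}$ balances exactly against the square-root in the volume form, leaving a product measure $|s_0|\,ds_0\cdots ds_n\,d\theta$.

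After these cancellations the remaining integral reduces to elementary Gaussian moments: $\int_\R |s_0|e^{-s_0^2}\,ds_0 = 1$, $\int_\R e^{-s^2}\,ds = \sqrt{\pi}$ applied $n$ times, and $\int_0^\pi d\theta = \pi$. Combined with the measure-normalization $1/\pi^{n+1}$ on $\tilde{Q}$ and with the prefactor $\sqrt{\pi d}^n$, all powers of $\pi$ cancel and only $\sqrt{d}^n$ multiplied by the universal constant from the first step survives. The most delicate point is the first step: because $\tilde M$ has unequal variances on its diagonal and off-diagonal entries, $\mathbb{E}|\det \tilde M|$ does not directly fit the definition of $e_\R(n)$, and the Wallis-type factor $\frac{n!!}{(n-1)!!}$ with its parity-dependent companion $\pi/2$ must emerge from a careful decomposition of this matrix moment; I expect this to be where the bulk of the technical work lies.
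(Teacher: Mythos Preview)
Your parametrization of $\tilde{Q}$ by a common direction and signed radii $s_0,\dots,s_n$ differs from the paper's, which uses $(a_0,b_0)=(a,b)$ and $(a_i,b_i)=t_i(a,b)$ with free scalars $t_i\in\R$. Your outer-integral computation is correct: the cancellation between the denominator $\sqrt{\det(ss^T+s_0^2I)}$, the volume element $\sqrt{s_0^2+\|s\|^2}\,d\theta\,ds$, and the Gaussian weight genuinely leaves $\sqrt{d}^{\,n}$ times whatever the inner matrix integral produces. The gap is in that inner step. After your rotation $(a_{ij},b_{ij})\mapsto(u_{ij},v_{ij})$ the matrix becomes $c\sum_{i\le j}u_{ij}\tilde{E}_{ij}$, and the expectation of $\lvert\det\sum_{i\le j} u_{ij}\tilde{E}_{ij}\rvert$ under the Gaussian that makes $\{\tilde{E}_{ij}\}$ orthonormal is, \emph{by the very definition} adopted in Definition~\ref{mat}, exactly $e_{\R}(n)$. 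No Wallis-type factor is hidden there, and inserting a $\sqrt{2}$ on the diagonal does not manufacture $\frac{n!!}{(n-1)!!}$ either.

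In the paper's proof the factor $\frac{n!!}{(n-1)!!}$ (with the extra $\pi/2$ for odd $n$) does not come from the symmetric-matrix moment at all. After the rescaling $\tilde{c}=\sqrt{1+\sum_i t_i^2}\,(a+ib)$ the integral over the $t_i$ separates off as
\[
\int_{\R^n}\frac{dt_1\cdots dt_n}{(1+\sum_i t_i^2)^{(n+1)/2}}
=\Vol(S^{n-1})\int_0^{\pi/2}\cos^{n-1}\theta\,d\theta,
\]
and it is this Wallis integral, combined with the radial moment $\int_0^{\infty} r^{n+1}e^{-r^2}\,dr=\tfrac12\Gamma\!\left(\tfrac{n}{2}+1\right)$ coming from $\tilde{c}=re^{i\vartheta}$, that produces the parity-dependent constant. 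Your parametrization converts the $t_i$ into plain Gaussian variables $s_i$ and thereby bypasses this integral entirely; the mechanism that generates $\frac{n!!}{(n-1)!!}$ in the paper's computation is simply absent from your scheme, so you should not expect the symmetric-matrix moment to supply it.
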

We recall that $Q\subset\mathbb{R}^{2(n+1)+n(n+1)}$ is the product of the intersection of quadrics:
$$\tilde{Q}=\big\{(a_0,b_0,\dots,a_n,\dots,b_n)\in \mathbb{R}^{2(n+1)}\mid a_0b_i-a_ib_0=0\quad\forall i=1,\dots,n\big\}$$
with 
 $\mathbb{R}^{n(n+1)}$ of coordinates $a_{ij}$ and $b_{ij}$ for $1\leq i\leq j\leq n$.
We consider the parametrization 
$\psi:\mathbb{R}^{(n+2)}\rightarrow \tilde{Q}$ defined by $$\psi(a,b,t_1,\dots,t_n)=(a,b,at_1,bt_1,\dots,at_n,bt_n).$$
\begin{lemm} We have $\textrm{Jac} (\psi)=\sqrt{1+\sum_it_i^2}\sqrt{(a^2+b^2)}^n$.
\end{lemm}
\begin{proof}
A computation gives us
\[\textrm{Jac}\psi \textrm{Jac}\psi^t=\det
\begin{bmatrix}
1+\sum_{i=1}^nt_i^2 & 0 & t_1a & t_2a &  \dots& \dots & t_na
 \\
0 & 1+\sum_{i=1}^nt_i^2 & t_1b & t_2b & \dots  & \dots& t_nb
 \\
 t_1a & t_1b & a^2+b^2 & 0 & \dots & \dots & 0
 \\

t_2a & t_2b & 0 & a^2+b^2 & \dots  & \dots &  0
\\
\dots & \dots & \dots & \dots & \dots & \dots  & \dots
\\
\dots & \dots & \dots & \dots & \dots & \dots  & \dots
\\
t_na & t_nb & 0& 0 & \dots& \dots   &a^2+b^2  
\end{bmatrix}
\]
We develop the last line and we obtain
\[(a^2+b^2)\det
\begin{bmatrix}
1+\sum_{i=1}^nt_i^2 & 0 & t_1a & t_2a &  \dots& \dots & t_{n-1}a
 \\
0 & 1+\sum_{i=1}^nt_i^2 & t_1b & t_2b & \dots  & \dots& t_{n-1}b
 \\
 t_1a & t_1b & a^2+b^2 & 0 & \dots & \dots & 0
 \\

t_2a & t_2b & 0 & a^2+b^2 & \dots  & \dots &  0
\\
\dots & \dots & \dots & \dots & \dots & \dots  & \dots
\\
\dots & \dots & \dots & \dots & \dots & \dots  & \dots
\\
t_{n-1}a & t_{n-1}b & 0& 0 & \dots& \dots   &a^2+b^2  
\end{bmatrix}
\]
\[+(-1)^{n}t_nb\cdot\det
\begin{bmatrix}
1+\sum_{i=1}^nt_i^2  & t_1a & t_2a &  \dots& \dots & t_na
 \\
0  & t_1b & t_2b & \dots  & \dots& t_nb
 \\
 t_1a  & a^2+b^2 & 0 & \dots & \dots & 0
 \\

t_2a &  0 & a^2+b^2 & \dots  & \dots &  0
\\
\dots  & \dots & \dots & \dots & \dots  & \dots
\\
\dots  & \dots & \dots & \dots & \dots  & 0
\\
t_{n-1}a  & 0& 0 & \dots& a^2+b^2    &0 
\end{bmatrix}
\]
\[+(-1)^{n+1}t_na\cdot\det
\begin{bmatrix}
 0 & t_1a & t_2a &  \dots& \dots & t_na
 \\
 1+\sum_{i=1}^nt_i^2 & t_1b & t_2b & \dots  & \dots& t_nb
 \\
 t_1b & a^2+b^2 & 0 & \dots & \dots & 0
 \\

 t_2b & 0 & a^2+b^2 & \dots  & \dots &  0
\\
  \dots & \dots & \dots & \dots & \dots  & \dots
\\
  \dots & \dots & \dots & \dots & \dots  & 0
\\
  t_{n-1}b & 0& 0 & \dots& a^2+b^2   &0 
\end{bmatrix}
\]
 For the second matrix we have:

\[\det
\begin{bmatrix}
1+\sum_{i=1}^nt_i^2  & t_1a & t_2a &  \dots& \dots & t_na
 \\
0  & t_1b & t_2b & \dots  & \dots& t_nb
 \\
 t_1a  & a^2+b^2 & 0 & \dots & \dots & 0
 \\

t_2a &  0 & a^2+b^2 & \dots  & \dots &  0
\\
\dots  & \dots & \dots & \dots & \dots  & \dots
\\
\dots  & \dots & \dots & \dots & \dots  & 0
\\
t_{n-1}a  & 0& 0 & \dots& a^2+b^2    &0 
\end{bmatrix}
\]

\[=(1+\sum_{i=1}^nt_i^2)\det
\begin{bmatrix}

 t_1b & t_2b & \dots  & \dots&\dots& t_nb
 \\
  a^2+b^2 & 0 & \dots & \dots & 0&0
 \\

 0 & a^2+b^2 & \dots  & \dots &  0&0
\\
 \dots & \dots & \dots & \dots  & \dots &\dots
\\
 \dots & \dots & \dots & a^2+b^2  & 0&0
\\
   0& \dots & \dots &0& a^2+b^2    &0 
\end{bmatrix}
\]
$$=(-1)^{n+1}(1+\sum_{i=1}^nt_i^2)t_nb(a^2+b^2)^{n-1}.$$
where the first equality is obtained by developping the first column and remarking that, in the development, each time we clear the $i$-th line, the $(i-1)$-th column and the last column are linearly equivalent.
Similarly, 
\[\det
\begin{bmatrix}
 0 & t_1a & t_2a &  \dots& \dots & t_na
 \\
 1+\sum_{i=1}^nt_i^2 & t_1b & t_2b & \dots  & \dots& t_nb
 \\
 t_1b & a^2+b^2 & 0 & \dots & \dots & 0
 \\

 t_2b & 0 & a^2+b^2 & \dots  & \dots &  0
\\
  \dots & \dots & \dots & \dots & \dots  & \dots
\\
  \dots & \dots & \dots & \dots & \dots  & 0
\\
  t_{n-1}b & 0& 0 & \dots& a^2+b^2   &0 
\end{bmatrix}
\]
$$=(-1)^n(1+\sum_{i=1}^nt_i^2)t_na(a^2+b^2)^{n-1}.$$
Then we have
\[\textrm{Jac}\psi \textrm{Jac}\psi^t=(a^2+b^2)\det
\begin{bmatrix}
1+\sum_{i=1}^nt_i^2 & 0 & t_1a & t_2a &  \dots& \dots & t_{n-1}a
 \\
0 & 1+\sum_{i=1}^nt_i^2 & t_1b & t_2b & \dots  & \dots& t_{n-1}b
 \\
 t_1a & t_1b & a^2+b^2 & 0 & \dots & \dots & 0
 \\

t_2a & t_2b & 0 & a^2+b^2 & \dots  & \dots &  0
\\
\dots & \dots & \dots & \dots & \dots & \dots  & \dots
\\
\dots & \dots & \dots & \dots & \dots & \dots  & \dots
\\
t_{n-1}a & t_{n-1}b & 0& 0 & \dots& \dots   &a^2+b^2  
\end{bmatrix}
\]
$$-(1+\sum_{i=1}^{n}t_i^2)t_n^2(a^2+b^2)^n.$$
Continuing to develop in the same way, we obtain by induction
$$\textrm{Jac}\psi \textrm{Jac}\psi^t=(1+\sum_{i=1}^nt_i^2)^2(a^2+b^2)^n-(1+\sum_{i=1}^nt_i^2)
\sum_{i=1}^nt_i^2(a^2+b^2)^n=(1+\sum_{i=1}^nt_i^2)(a^2+b^2)^n.$$
Passing to the square root we obtain the result.
\end{proof}
\begin{oss} In the following we will not write the symbols $\tilde{E}_{ij}$ and $E_{ij}$ defined in Definition \ref{mat} in order to simplify the notation. 
\end{oss}
After this change of variables, we have:

$$
\sqrt{\pi d}^n\int_{Q}\frac{|\det(a_0b_{ij}-b_0a_{ij})|}{\sqrt{\det((a_ia_j+b_ib_j)+(a_0^2+b_0^2)Id)}}
\frac{e^{-\sum_ia_i^2-\sum_i
b_i^2-\sum_{i,j}a_{ij}^2-\sum_{i,j}b_{ij}^2}}{\pi^{n+1+\frac{n(n+1)}{2}}}\dV_Q=
$$
$$
=\sqrt{\pi d}^n\int_{a,b,a_{ij},b_{ij},t_i\in\mathbb{R}}\frac{\big|\det\big(ab_{ij}-ba_{ij}\big)\big|}{\sqrt{\det\bigg((a^2+b^2)\big((t_it_j)+Id\big)\bigg)}}\times
$$
$$\times\frac{e^{-(1+\sum_it_i^2)(a^2+b^2)-\sum_{i,j}(a_{ij}^2+ b_{ij}^2)}}{\pi^{n+1+\frac{n(n+1)}{2}}}\sqrt{(1+\sum_it_i^2})\sqrt{(a^2+b^2)}^n\textrm{d}a\textrm{d}b\textrm{d}a_{ij}\textrm{d}b_{ij}\textrm{d}t_i
$$
Now $\det\big((a^2+b^2)((t_it_j)_{ij}+Id)\big)=(1+\sum_it_i^2)(a^2+b^2)^n$ so we obtain

$$\sqrt{\pi d}^n\int_{a,b,a_{ij},b_{ij},t_i\in\mathbb{R}}|\det\big((ab_{ij}-ba_{ij})_{ij}\big)|
\frac{e^{-(1+\sum_it_i^2)(a^2+b^2)-\sum_{i,j}(a_{ij}^2+ b_{ij}^2)}}{\pi^{n+1+\frac{n(n+1)}{2}}}\textrm{d}a\textrm{d}b\textrm{d}a_{ij}\textrm{d}b_{ij}\textrm{d}t_i.$$
It is more practical to see $(a,b)$ as a complex number $c\in\mathbb{C}$ and
also $(a_{ij},b_{ij})$ as $e_{ij}\in\mathbb{C}$. With a slight abuse of notation, we denote $\textrm{d}c$ and $\textrm{d}e_{ij}$ instead of  $\frac{-1}{2i}\textrm{d}c\textrm{d}\bar{c}$ and $\frac{-1}{2i}\textrm{d}e_{ij}\textrm{d}\bar{e}_{ij}$. Then we have

$$\sqrt{\pi d}^n\int_{c\in\mathbb{C},e_{ij}\in\mathbb{C},t_i\in\mathbb{R}}\big|\det\big(\Im(\bar{c}e_{ij})\big)\big|
\frac{e^{-(1+\sum_it_i^2)| c |^2-\sum_{i,j}|e_{ij}|^2}}{\pi^{n+1+\frac{n(n+1)}{2}}}\textrm{d}c\textrm{d}e_{ij}\textrm{d}t_i.$$
Now, set $\tilde{c}=(\sqrt{1+\sum_it_i^2})c$ and then $\tilde{c}=re^{i\vartheta}$. We obtain

$$
\sqrt{\pi d}^n\int_{\tilde{c}\in\mathbb{C},e_{ij}\in\mathbb{C}}\big|\det\big(\Im(\bar{\tilde{c}}e_{ij})\big)\big|\frac{e^{-|\tilde{c}|^2-\sum_{i,j}|e_{ij}|^2}}{\pi^{1+\frac{n(n+1)}{2}}}\textrm{d}\tilde{c}\textrm{d}e_{ij}\cdot\int_{t_i\in\mathbb{R}}\frac{1}{\pi^{n}\sqrt{1+\sum_it_i^2}^{n+1}}
\textrm{d}t_i=
$$
\begin{equation*} \sqrt{\pi d}^n\int_{\vartheta\in (0,2\pi],e_{ij}\in\mathbb{C}}\big|\det\big(\Im(e^{-i\vartheta}e_{ij})\big)\big|\frac{e^{-\sum_{i,j}| e_{ij}|^2}}{\pi^{\frac{n(n+1)}{2}}}\textrm{d}e_{ij}\textrm{d}\vartheta
\cdot\int_{r=0}^{+\infty}\frac{r^{n+1}e^{-r^2}}{\pi}\textrm{d}r 
\cdot\int_{t_i\in\mathbb{R}}\frac{1}{\pi^{n}\sqrt{1+\sum_it_i^2}^{n+1}}
\textrm{d}t_i. 
\end{equation*}
Then, we  have to compute the three integrals appearing in the last equation.\\

For the first term, we have
$$\int_{\vartheta\in (0,2\pi],e_{ij}\in\mathbb{C}}\big|\det\big(\Im(e^{-i\vartheta}c_{ij})\big)\big|\frac{e^{-\sum_{i,j} |e_{ij}|^2}}{\pi^{\frac{n(n+1)}{2}}}\textrm{d}e_{ij}\textrm{d}\vartheta=$$
$$=2\pi\int_{e_{ij}\in\mathbb{C}}|\det(\Im e_{ij})|\frac{e^{-\sum_{i,j}|e_{ij}|^2}}{\pi^{\frac{n(n+1)}{2}}}\textrm{d}e_{ij}=$$
$$=2\pi\int_{b_{ij}\in\mathbb{R}}|\det(b_{ij})|\frac{e^{-\sum_{i,j}b_{ij}^2}}{\sqrt{\pi}^{\frac{n(n+1)}{2}}}\textrm{d}b_{ij}=2\pi e_{\mathbb{R}}(n).$$
Here, $e_{\mathbb{R}}(n)=\int_{B\in \Sym(n,\mathbb{R})}|\det B| \textrm{d}\mu_{\mathbb{R}}(B)$. For the explicit value of $e_{\mathbb{R}}(n)$, see \cite[Section 2]{gw2}.\\

For the second term, we consider the change of variable $r^2=\rho$ and we obtain  $$\int_{r=0}^{+\infty}\frac{r^{n+1}e^{-r^2}}{\pi}\textrm{d}r=
\frac{1}{2}\int_{\rho=0}^{+\infty}\frac{\rho^{\frac{n}{2}e^{-\rho}}}{\pi}\textrm{d}\rho=\frac{\Gamma(\frac{n}{2}+1)}{2\pi}$$
where $\Gamma$ is the Gamma function.\\

For the third term we use spherical coordinates and we obtain
$$\int_{t_i\in\mathbb{R}}\frac{1}{\sqrt{1+\sum_it_i^2}^{n+1}}
\textrm{d}t_i=\textrm{Vol}(S^{n-1})\int_{t=0}^{+\infty}\frac{t^{n-1}}{\sqrt{(1+t^2)}^{n+1}}\textrm{d}t.$$
where $$\textrm{Vol}(S^{n-1})=\frac{2\pi^{\frac{n}{2}}}{\Gamma(\frac{n}{2})}$$ is the volume of the $(n-1)$-dimensional sphere.
For 
$$\int_{t=0}^{+\infty}\frac{t^{n-1}}{(\sqrt{1+t^2})^{n+1}}\textrm{d}t=
\int_{t=0}^{+\infty}\sqrt{\frac{t^2}{(1+t^2)}}^{n-1}\frac{1}{1+t^2}\textrm{d}t$$
we make the change $\frac{t^2}{1+t^2}=1-u^2$ and we obtain
$$\int_0^1 \sqrt{1-u^2}^{n-2}\textrm{d}u.$$ 
Finally, set $u=\sin(\theta)$ and we have

$$\int_0^{\pi/2}\cos^{n-1}(\theta) \textrm{d}\theta.$$
The formula $$\int \cos^{n-1}(\theta)\textrm{d}u=\frac{\sin(\theta)\cos^{n-2}(\theta)}{n-1}+\frac{n-2}{n-1}\int \cos^{n-3}(\theta)\textrm{d}u$$
tells us that $\int_0^{\pi/2}\cos^{n-1}(\theta) \textrm{d}\theta$ is equal to 
$$\frac{(n-2)!!}{(n-1)!!}$$
if $n$ is even and it is equal to 

$$\frac{(n-2)!!}{(n-1)!!}\frac{\pi}{2}$$
if $n$ is odd.\\

Putting together all the values of these three integrals and using that $$\frac{\Gamma(\frac{n}{2}+1)}{\Gamma(\frac{n}{2})}=\frac{n}{2},$$ we obtain Proposition \ref{unconst}.\qed

\subsection{End of the proofs  of Theorems \ref{num}, \ref{eqreal} and  \ref{eqcom}}
We use the notations of Sections \ref{lefcoarea}, \ref{lefcomputation} and \ref{seccomput}.
By Proposition \ref{espform}, we have 
 \begin{multline}\label{fine}
 \mathbb{E}[\mathbb{R}\nu_{\alpha\beta}](\varphi)=\int_{\mathbb{R} X}\varphi(x)\big(\int_Q\frac{|\textrm{Jac}_N(A_{\alpha\beta})|}{| \textrm{Jac}(B_{\alpha\beta})|}\textrm{d}\mu(a_i,b_i,a_{kl},b_{kl})+O(\frac{1}{d})\big)\dV_h=
\\=\int_{\mathbb{R} X}\varphi(x)\bigg(\int_{\begin{array}{l} a_i,b_i,a_{ij},b_{ij}\\ a_0b_i-b_0a_i=0\end{array}}\sqrt{\pi d}^n
 \big(
 \frac{\det(a_0b_{ij}-b_0a_{ij})}{\sqrt{\det\big((a_ia_j+b_ib_j)+(a_0^2+b_0^2)Id\big)}}
+O(\frac{1}{\sqrt{d}})\big)\textrm{d}\mu_Q \bigg)\dV_h
\end{multline} 
where  $\textrm{d}\mu_Q=
\frac{e^{-\sum_ia_i^2-\sum_i
b_i^2-\sum_{i,j}a_{ij}^2-\sum_{i,j}b_{ij}^2}}{\pi^{n+1+\frac{n(n+1)}{2}}}\dV_Q.$ By Proposition \ref{unconst}  we have that the inner term of the last equation
$$\sqrt{\pi d}^n\int_{\begin{array}{l} a_i,b_i,a_{ij},b_{ij}\\ a_0b_i-b_0a_i=0\end{array}}\frac{|\det(a_0b_{ij}-b_0a_{ij})|}{\sqrt{\det\big((a_ia_j+b_ib_j)+(a_0^2+b_0^2)Id\big)}}
\frac{e^{-\sum_ia_i^2-\sum_i
b_i^2-\sum_{i,j}a_{ij}^2-\sum_{i,j}b_{ij}^2}}{\pi^{n+1+\frac{n(n+1)}{2}}}\dV_Q$$
 is equal to $$\left\{\begin{array}{ll}
\frac{n!!}{(n-1)!!}e_{\mathbb{R}}(n)\frac{\pi}{2}\sqrt{d}^n &\textrm{if n is odd}
\\\frac{n!!}{(n-1)!!}e_{\mathbb{R}}(n)\sqrt{d}^n & \textrm{if n is even}.
\end{array}\right.$$
Theorem \ref{eqreal} is then obtained by dividing Equation (\ref{fine}) by $\sqrt{d}^n$ and then by passing to the limit.  \\
Theorem \ref{num} is  Theorem \ref{eqreal} for $\varphi=1$.\\
The proof of Theorem \ref{eqcom} follows the lines the proof of Theorem \ref{eqreal}. For the computation of the universal constant in this case, we  put $\varphi=1$ and we use Proposition \ref{asymlef}.\qed


\bibliographystyle{plain}
\bibliography{biblio}

\end{document}